\documentclass[12pt]{amsart}

\usepackage[T1]{fontenc}
\usepackage{fullpage}
\usepackage{amsmath,amssymb,mathtools,mathrsfs}
\usepackage{algorithm,algpseudocode}
\usepackage{longtable,array}
\usepackage[dvipsnames]{xcolor}
\usepackage{microtype}
\usepackage[
  colorlinks=true,
  linkcolor=BrickRed,
  citecolor=Blue,
  urlcolor=Blue
]{hyperref}
\numberwithin{equation}{section}
\theoremstyle{plain}
\newtheorem{thm}{Theorem}[section]
\newtheorem{lem}[thm]{Lemma}
\newtheorem{prop}[thm]{Proposition}

\theoremstyle{definition}

\theoremstyle{remark}

\newcommand{\CC}{\mathbb C}
\newcommand{\ZZ}{\mathbb{Z}}

\newcommand{\PP}{\mathbb P}
\newcommand{\DD}{\mathbb D}
\newcommand{\calF}{\mathcal F}
\newcommand{\calH}{\mathcal H}
\newcommand{\calM}{\mathcal M}
\DeclareMathOperator{\Aut}{Aut}
\DeclareMathOperator{\GL}{GL}
\DeclareMathOperator{\PGL}{PGL}
\DeclareMathOperator{\SL}{SL}

\DeclareMathOperator{\Sym}{Sym}
\DeclareMathOperator{\diag}{diag}

\DeclareMathOperator{\tr}{tr}

\newcommand{\Auts}{\Aut^{\mathrm s}}
\DeclareMathOperator{\ord}{ord}
\newcommand{\disc}{\operatorname{disc}}
\newcommand{\rk}{\operatorname{rk}}

\title{Remarks on Lattice Actions Corresponding to Automorphism Groups of Smooth Cubic Fourfolds}
\author[J. Fu]{Jie Fu}
\address{Qiuzhen College, Tsinghua University, China}
\email{fu-j21@mails.tsinghua.edu.cn}
\date{}

\begin{document}
\raggedbottom
\allowdisplaybreaks[4]
\bibliographystyle{amsalpha}

\begin{abstract}
The automorphism groups of smooth cubic fourfolds over the complex numbers, together with actions on defining equations, are classified in previous works. In this paper, we complete the calculation of group actions on the primitive integral cohomology lattice for all of the $156$ families in the classification. We distinguish similar families, determine the remaining action-maximal cases, and use restriction to complete the list. We also determine the generic algebraic lattices for each family. The calculation depends mainly on OSCAR.
\end{abstract}

\maketitle
\setcounter{tocdepth}{1}
\tableofcontents

\medskip

\section{Introduction}\label{sec:introduction}
The classification of automorphism groups of smooth cubic fourfolds begins with the classification of symplectic groups and their coinvariant lattices by Laza--Zheng \cite{laza2022automorphisms}, using the results of H\"ohn--Mason \cite{hohn2016290}. Koike gives the corresponding projective actions and invariant cubics \cite{KOIKE202512,KOIKE2026}, and Yang--Yu--Zhu classify the finite groups acting faithfully on smooth cubic fourfolds \cite{yang2024automorphism}. The non-symplectic indices are studied in \cite{FWZ26}, and the classification of possible lattice actions is developed in \cite{FZ26}. The resulting classification in \cite{FWZclass} consists of $156$ connected families. We write $\mathscr{F}_i$ for the $i$-th family in \cite[Table~1]{FWZclass}.

We complete the lattice calculations of \cite{FZ26} and match the resulting actions with the geometric families in \cite{FWZclass}.

For a smooth cubic fourfold $X$ over the complex numbers, write $\Lambda$ and $\Lambda_0$ for the middle integral cohomology and its primitive part, respectively. By the global Torelli theorem, $\Aut(X)$ is isomorphic to the group of integral Hodge isometries of $\Lambda$ fixing the square of the hyperplane class. In this note, we finish the calculation of group actions on these lattices and of the sublattices $S,T,P,K$ and $A$:
\begin{thm}\label{thm:catalogue}
For each of the $156$ families in \cite[Table~1]{FWZclass}, we determine the action of the automorphism group of a very general member on $\Lambda_0$, up to conjugacy in $O(\Lambda_0)$, together with the primitive sublattices $S,T,P,K\subset\Lambda_0$. These data also determine the generic algebraic lattice $A\subset\Lambda$.
\end{thm}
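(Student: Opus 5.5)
The proof is a computer-assisted case analysis over the $156$ families of \cite[Table~1]{FWZclass}. For each family we start from the explicit projective action and invariant cubic given in \cite{KOIKE202512,KOIKE2026,yang2024automorphism}. We pass to the lattice data via the classification of possible lattice actions in \cite{FZ26}. By the global Torelli theorem, $G=\Aut(X)$ for very general $X$ in a family is a subgroup $G\subset O(\Lambda_0)$ satisfying three conditions. It extends to $\Lambda$ fixing $h^2$, i.e.\ it acts trivially on the discriminant group $\ZZ/3$ up to the usual sign. Its symplectic part $G_s$ has coinvariant lattice $S=(\Lambda_0)_{G_s}$ from Laza--Zheng \cite{laza2022automorphisms}. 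The character of $G/G_s$ on $H^{3,1}$ has the non-symplectic index listed in \cite{FWZ26}.

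With the notation of the paper: $S$ is the symplectic coinvariant lattice and $T=S^\perp$. $P$ is the $G$-coinvariant sublattice of $\Lambda_0$. $K$ is the sublattice on which the non-symplectic part acts through nontrivial characters. $A=(\Lambda_0)^G\oplus\ZZ h^2$, saturated in $\Lambda$, is the generic algebraic lattice. For a very general member the transcendental lattice is the smallest primitive sublattice whose complexification contains the eigenspace carrying $H^{3,1}$. Hence $A$ is its orthogonal complement in $\Lambda$ together with $h^2$, so $A$ is determined once the action is known. This justifies the last sentence of Theorem~\ref{thm:catalogue}.

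The steps are as follows.

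\begin{enumerate}
\item \textbf{Action-maximal families.} For each family that is action-maximal, meaning not obtained by restricting a larger family's action, we construct the action on $\Lambda_0$ in OSCAR. The construction glues a $G_s$-action on $S$ (from the Leech-type data of H\"ohn--Mason \cite{hohn2016290}) with a $G/G_s$-action on $T$. We glue along the discriminant forms using equivariant primitive extensions, as in \cite{FZ26}. We then check three things: no $(-2)$- or short roots of the forbidden type lie in $A_0=(\Lambda_0)^G$ (the smoothness condition via Laza/Looijenga), that the period domain has the dimension of the family, and that the abstract group matches.
\item \textbf{Restriction to the remaining families.} Every other family is a specialization-inverse, i.e.\ a subgroup, of an action-maximal one. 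For these we restrict the ambient action to the subgroup $G'$ and recompute $S,T,P,K$ by taking invariants and coinvariants. We must also check that the generic member of the larger-dimensional family has exactly $G'$, i.e.\ that $G'$ is saturated. This holds if the centralizer-type extensions produce extra roots or change the period dimension.
\item \textbf{Separating similar families.} Where several families in \cite{FWZclass} share the same abstract group and dimension, we distinguish them by conjugacy-invariant lattice data. These are the genera of $S$, $P$ and $A$, the characteristic polynomials of the generators, and, when needed, an explicit OSCAR conjugacy test in $O(\Lambda_0)$ via isometry classification of the pairs $(\Lambda_0,G)$. This lets us match each lattice action to the correct geometric family.
\end{enumerate}

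The main obstacle is uniqueness up to conjugacy. Knowing the genus of $S$ and $T$ does not fix the gluing, and conjugacy classes of equivariant gluings correspond to double cosets of images of $O(S,G)$ and $O(T,G)$ in $O(q)$ of the glue group. These computations can be large for big symplectic groups. Our plan is to use Miranda--Morrison-type genus-to-class results wherever the lattices are indefinite with large rank, which gives uniqueness cheaply. In the few definite or low-rank cases we would fall back to enumerating double cosets in OSCAR directly.
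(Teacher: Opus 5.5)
Your three-step architecture matches the paper's: action-maximal families, then restriction to subgroups, then separation of look-alike families. Two things go wrong, however.

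\textbf{Your lattice dictionary is inverted, and it feeds the wrong smoothness test.} You take $P$ to be the $G$-coinvariant lattice and $A=\operatorname{Sat}_\Lambda((\Lambda_0)^G\oplus\ZZ h^2)$. Then the trivial-group family $\mathscr{F}_{132}$ gets $P=0$ and $A=\Lambda$ instead of $P=\Lambda_0$ and $A=\ZZ h^2$. For every index-one family, $(\Lambda_0)^G=T$ is the transcendental lattice, not an algebraic one. The correct objects are:
\begin{itemize}
\item $P=\ker\Phi_m(f_T)$, the transcendental lattice of a very general period \cite[Proposition~4.7]{FZ26};
\item $K=P^\perp_{\Lambda_0}\supset S$;
\item $A=P^\perp_\Lambda=\operatorname{Sat}_\Lambda(\langle h^2\rangle\oplus K)$.
\end{itemize}
Your sentence about the smallest primitive sublattice containing the eigenspace is the right one, and it contradicts your definitions.

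Correspondingly, \cite[Proposition~4.4]{FZ26} requires $K$ to contain no short root (square $2$) and no long root (square $6$, divisibility $3$). Testing $(\Lambda_0)^G$ instead fails in both directions. It would reject $\mathscr{F}_{132}$, since $\Lambda_0$ has short roots. It would also miss roots of $K$ that are not $G$-invariant. Any root of $K$ is orthogonal to every eigenperiod, so it puts all of them in $\calH_2\cup\calH_6$. Finally, the action must lie in $\widetilde O(\Lambda_0)$ exactly, not up to sign: acting by $-1$ on $A_{\Lambda_0}$ corresponds to $h^2\mapsto-h^2$.

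\textbf{More seriously, your plan has no mechanism linking a family given by equations to an integral action.} Genera of $S,T,P,K,A$ and conjugacy tests in $O(\Lambda_0)$ separate candidate actions from one another. They do not say which candidate belongs to which cubic. You list characteristic polynomials, but not how to obtain them for the geometric action.

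The paper's first bridge is Proposition~\ref{prop:equation-character}: traces on $T_H$ computed from the Jacobian ring via \eqref{eq:jacobian-character} and Griffiths' residue formula. These traces do two jobs.
\begin{itemize}
\item They fix the characteristic polynomials and signatures that drive the enumeration for the six action-maximal families with $\rk S<15$. There \cite{FZ26} supplies no actions, and your step~1 glues an unspecified action on $T$. Without these traces you could not separate $\mathscr{F}_{152}$ from $\mathscr{F}_{151}$, or $\mathscr{F}_{154}$ from $\mathscr{F}_{153}$, whose rows in Table~\ref{tab:family-discriminants} coincide.
\item The same trace multisets select the subgroup of $\Gamma$ induced by the linear inclusion in Algorithm~\ref{alg:subgroup-restriction}. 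Several classes can survive. Linear conjugacy together with connectedness of the smooth invariant locus (Lemma~\ref{lem:restriction}) then decides.
\end{itemize}

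A second bridge is needed when even this rational data agrees. The two $M_{10}$ cubics have index one, isometric $S$ and $T$, and the same traces; they differ only in the gluing. The non-symplectic involutions of the two $L_2(7){:}2$ cubics restrict to $T$ as $-r_1$ and $-r_2$, which have equal characteristic polynomials. In these cases the paper uses coordinatewise complex conjugation. It reverses the orientation of $T\otimes\mathbb R$ and acts trivially on $A_{\Lambda_0}$. The paper determines its fixed lattice at a cubic with known $T$ (the non-Clebsch $A_7$ cubic, resp.\ the Clebsch--Segre cubic). It then transports this along a real path of smooth cubics (Propositions~\ref{prop:m10-identification} and~\ref{prop:l27}).

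Finally, the obstacle you single out is not the operative one. Miranda--Morrison theory controls $O(L)\to O(q_L)$ and class numbers. It does not control conjugacy classes of isometries with a prescribed characteristic polynomial, or of equivariant gluings. In the paper, uniqueness comes from explicit class counts (e.g.\ Lemma~\ref{lem:cyclic-powers}) and from connectedness. Saturation of restricted families is taken from \cite[Theorem~6.10(1)]{FWZclass}, not from a root or dimension heuristic.
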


See \S~\ref{sec:preliminaries} for explicit definitions of the lattices $S,T,P,K$ and $A$.

Many cases with distinguishable groups have already been solved in \cite{FZ26}. We further distinguish the two cubic fourfolds both with symplectic group $M_{10}$, the two families both with automorphism group $L_2(7):2$, and similarly for $S_{3,3}$ and $A_{3,3}$. For five further action-maximal families, we enumerate the lattice actions using characteristic polynomials calculated from the defining equations. We construct the action for the action-maximal family with automorphism group $C_2\times C_4$, and obtain further actions by restriction.

The paper is organized as follows. \S\ref{sec:preliminaries} recalls the conventions of lattices, period maps, and previous results, and gives the formulas for calculating characteristic polynomials. \S\ref{sec:two-pairs} distinguishes similar families that can not be identified using plain information like abstract structure of (symplectic) automorphism group or dimension. \S\ref{sec:maximal} determines the six remaining action-maximal cases by enumeration or construction. \S\ref{sec:restriction} obtains the other actions by restriction and treats the special $A_4\times C_2$ family separately. \S\ref{sec:data} lists the discriminant forms of lattices associated to each family.

The lattice computations use OSCAR \cite{OSCAR}. The computational files are archived in \cite{FuWangZhengAutcub4fold2026}. The integral actions and algebraic lattices are in \path{remark/catalogue/assembly/lattice_156_with_algebraic.mrdi}. The Gram matrices and the forms $q_P,q_K$ are listed in \path{remark/catalogue/assembly/lattice_156_gram_matrices.txt}. The ancillary files include copies of these data and \path{anc/lattice_156_actions.txt}, which gives bases of the embedded lattices and matrices of their compatible actions, including generators of the symplectic groups.

\medskip\noindent\textbf{Acknowledgements.} The author thanks Zhiwei Zheng for organizing a seminar a year ago and encouraging him to study lattice theory and take part in this series of studies. The author also thanks Shihao Wang for their previous collaboration and for his interest in this paper and helpful discussions, Stevell Muller for his help and advice on OSCAR.jl, and the other contributors to OSCAR.jl. The author is grateful to ICERM and Brown University's Center for Computation and Visualization for access to the Oscar computing cluster, where the similarly named OSCAR.jl was used for some calculations. Part of this work was carried out at ICERM, Brown University, during the Fall 2026 program ``Computations on K3 Surfaces and Related Varieties'', with support through National Science Foundation grant DMS-2424556.

\medskip\noindent\textbf{AI disclosure.} Large language models were used to explore proof strategies, assist in the development and refinement of mathematical arguments, assist in writing and organizing code, and draft and revise the text. The author has checked all mathematical arguments and the full text, and takes responsibility for all results and arguments presented in this paper, as well as for the code and accompanying documentation.

\section{Preliminaries}\label{sec:preliminaries}

\subsection{Lattices and periods}

For an integral lattice $L$, write $L^\vee$ for its dual lattice, $A_L=L^\vee/L$ for its discriminant group, and $\disc(L)$ for its discriminant. Then $|A_L|=|\disc(L)|$. For even $L$, write $q_L$ for its discriminant quadratic form.

Write $O(L)$ for the isometry group of $L$ and, when $L$ is even, $\widetilde O(L)$ for the kernel of $O(L)\to O(q_L)$. For a primitive vector $v\in L$, let $\operatorname{div}_L(v)=\gcd\{(v,x):x\in L\}$.

Let $X=V(F)\subset\PP^5$ be a smooth cubic fourfold, and let $h$ be its hyperplane class. Write $\Lambda=H^4(X,\ZZ)$ and $\Lambda_0=(h^2)^\perp_\Lambda$. The lattice $\Lambda$ is odd and unimodular, and $\langle h^2\rangle\cong\langle3\rangle$. We take $E_8$ and $A_2$ to be positive definite. Then
\[
\Lambda_0=(h^2)^\perp_{\Lambda}
\cong U^{\oplus2}\oplus E_8^{\oplus2}\oplus A_2
\]
has signature $(20,2)$ and discriminant $3$ \cite[\S2.1]{laza2022automorphisms}.

Every automorphism of $X$ is induced by a projective linear transformation \cite{Matsumura1963OnTA}.

Recall the global Torelli theorem for cubic fourfolds, originally shown in \cite{voisin1986torelli,voisin2008erratum}, with an alternative proof provided by \cite{looi2009period}. Let $X'$ be another smooth cubic fourfold with hyperplane class $h'$. Any Hodge isometry $H^4(X',\ZZ)\xrightarrow{\sim}H^4(X,\ZZ)$ sending $(h')^2$ to $h^2$ is induced by a unique isomorphism $X\xrightarrow{\sim}X'$.

Taking $X'=X$, we obtain
\[
\Aut(X)\cong\Aut_{\mathrm{HS}}(\Lambda,h^2),
\]
where $\Aut_{\mathrm{HS}}(\Lambda,h^2)$ is the group of integral Hodge isometries fixing $h^2$; see also \cite[Proposition~1.3]{zheng2021orbifold}.

Following \cite[\S2.1]{laza2022automorphisms}, let $\DD$ be one component of the domain of isotropic lines $[\sigma]\in\PP(\Lambda_0\otimes\CC)$ with $(\sigma,\overline\sigma)<0$. Write $O^+(\Lambda_0)$ for the subgroup preserving $\DD$. A vector in $\Lambda_0$ is a short root if it has square $2$, and a long root if it has square $6$ and divisibility $3$. Let $\calH_6$ and $\calH_2$ be the hyperplane arrangements defined by short and long roots, respectively.

Let $\calM$ be the moduli space of smooth cubic fourfolds. The period map gives an isomorphism of quasi-projective varieties \cite{laza2010period,looi2009period}
\[
\calM\cong\bigl(\DD\setminus(\calH_2\cup\calH_6)\bigr)/
\bigl(\widetilde O(\Lambda_0)\cap O^+(\Lambda_0)\bigr).
\]

Let $G=\Aut(X)$, and let $G^s=\Auts(X)$ be the subgroup acting trivially on $H^{3,1}(X)$. Write $T_X=\Lambda_0^{G^s}$ and $S_X=(T_X)^\perp_{\Lambda_0}$, and omit $X$ when it is fixed. The lattice $S$ is positive definite, while $T$ has signature $(20-\rk S,2)$.

By Nikulin's results for primitive embeddings \cite[Proposition~1.5.1 and Corollary~1.5.2]{Nikulin_1980}, the quotient $\Lambda_0/(S\oplus T)\subset A_S\oplus A_T$ is the graph of an anti-isometry between subgroups of $A_S$ and $A_T$. Every isometry in $\widetilde O(\Lambda_0)$ extends to $\Lambda$ fixing $h^2$.

The quotient $G/G^s$ is cyclic, and its order is the non-symplectic index \cite{FWZ26}. Choose $f\in G$ mapping to a generator of $G/G^s$, and also write $f\in\widetilde O(\Lambda_0)$ for its induced isometry. We call this an \emph{extra isometry}, and write $f_T=f|_T$. Then $\ord(f_T)=[G:G^s]$. If $G=G^s$, we choose $f=\mathrm{id}$.

A \emph{fitting extension} of $f_T\in O(T)$ is an isometry of $\Lambda_0$ preserving $S,T$ and restricting to $f_T$ on $T$ \cite[\S5.1]{FZ26}.

We regard coordinates $x$ as a column vector and write $\Aut(F)=\{M\in\GL(6,\CC):F(Mx)=F(x)\}$. A \emph{strict linear group} is a finite subgroup $H\subset\Aut(F)$ containing $\mu_3 I_6$.

For the classified family $\mathscr{F}_i$, let $H_i$ be its associated linear group and write $G_i=H_i/(\mu_3 I_6)\subset\PGL(6,\CC)$. The symplectic subgroup of $G_i$ is $G_i^s=H_i^s/(\mu_3 I_6)$, where $H_i^s=H_i\cap\SL(6,\CC)$. Write $W_i=\Sym^3((\CC^6)^*)^{H_i}$, and let $W_i^{\mathrm{sm}}$ be the nonempty locus of forms defining smooth cubic fourfolds. Then $G_i\subset\Aut(V(F))$ for every $F\in W_i^{\mathrm{sm}}$. The family is \emph{saturated} if equality holds for a general member \cite[Definition~3.4(1)]{FZ26}.

Following \cite[\S2A--B]{yu2020moduli} and \cite[\S3.1]{FZ26}, the moduli space with this prescribed symmetry is the geometric quotient
\[
\calF_i=W_i^{\mathrm{sm}}/N_{\GL(6,\CC)}(H_i).
\]
Let $\pi_i:\calF_i\to\calM$ forget the group action, and write $\mathcal Z_i=\pi_i(\calF_i)$ with its reduced induced structure. The map $\calF_i\to\mathcal Z_i$ is finite and surjective, and is the normalization morphism when the family is saturated \cite[Proposition~2.7]{yu2020moduli}. Thus $\dim\calF_i=\dim\mathcal Z_i$. We keep $\mathscr{F}_i$ for the classified family and express incidence relations using its moduli image $\mathcal Z_i$.

By the Hodge--Riemann relations, $(H^{3,1}(X)\oplus H^{1,3}(X))\cap\Lambda_{0,\mathbb R}$ is negative definite of dimension two, and the real primitive $(2,2)$-part is positive definite. The marked symmetric period domain has two components of type IV for non-symplectic index one or two, and is a complex ball otherwise \cite[Propositions~4.1 and 4.9]{yu2020moduli}.

Write $A_X=\Lambda\cap H^{2,2}(X)$ for the algebraic lattice \cite{hassett2000special}, and let $K_X=A_X\cap\Lambda_0$ and $P_X=(K_X)^\perp_{\Lambda_0}$. Then
\[
A_X=(P_X)^\perp_\Lambda=\operatorname{Sat}_{\Lambda}(\langle h^2\rangle\oplus K_X).
\]
Let $j=|A_X/(\langle h^2\rangle\oplus K_X)|$. Unimodularity of $\Lambda$ and $(h^2)^2=3$ give $j\in\{1,3\}$ and $j^2\disc P_X=3\disc K_X$. When $j=1$, $A_X\cong\langle3\rangle\oplus K_X$.

\subsection{Previous results}

There are $48$ connected symplectic families \cite[Proposition~4.2]{FZ26}. The symplectic subgroup acts as $\widetilde O(S)$ on $S$ and trivially on $T$ \cite{laza2022automorphisms}. The image of $G^s$ and the extra isometry $f$ generate the image of $G$ in $O(\Lambda_0)$. Recovering this embedded action requires the primitive embedding and a fitting extension of the action on $T$.

On the connected smooth locus of cubics fixed by $H$, its action on $\Lambda_0$ is locally constant up to conjugacy. In a local marking, additional rational classes have type $(2,2)$ only on a countable union of proper analytic subsets. Thus the lattices $S_X,T_X,A_X,K_X,P_X$ of a saturated family are independent of a very general member, up to integral conjugacy. We omit $X$ for these generic lattices. Lattice-theoretic equivalence also preserves the action on $H^{3,1}$ \cite[Definition~3.4(3)]{FZ26}.

The enumeration in \cite[Theorem~1.1 and Proposition~5.8]{FZ26} gives the groups, non-symplectic indices, and compatible lattice actions for $15\leq\rk S\leq19$. The same enumeration of lattice actions applies in lower rank, but the larger rank of $T$ makes it expensive.

Write $\zeta_m=e^{2\pi i/m}$. Let $f\in\widetilde O(\Lambda_0)$ preserve $S,T$, with $f_T$ of order $m$, and let $P=\ker\Phi_m(f_T)$ and $K=P^\perp_{\Lambda_0}$. If the real subspace of $T\otimes\mathbb R$ corresponding to $\zeta_m,\zeta_m^{-1}$ has signature $(*,2)$ and $K$ contains neither short nor long roots, the $\zeta_m$-eigenspace of $f_T$ contains smooth periods, and $f$ is induced by an automorphism \cite[Proposition~4.4]{FZ26}. For a very general period, $P$ and $K$ are the transcendental and primitive algebraic lattices \cite[Proposition~4.7]{FZ26}. We check the generic automorphism group separately.

All $156$ families are self-conjugate \cite[Proposition~6.6]{FWZclass}. Thus the two conjugate components of type IV for a fixed lattice action give equivalent families \cite[Proposition~3.6]{FZ26}.

\subsection{Characteristic polynomials}

Let $M\in\GL(6,\CC)$ satisfy $F(Mx)=\eta(M)F(x)$. Griffiths' residue formula \cite{griffiths1969rational} gives its eigenvalue on $H^{3,1}(X)$ as $\chi(M)=\det(M)\eta(M)^{-2}$. On $\Aut(F)$, this is $\chi(M)=\det(M)$, and cubic scalar matrices act trivially.

Let $R_F=\CC[x_1,\ldots,x_6]/(\partial F)$ be the Jacobian ring. For $M\in\Aut(F)$ with eigenvalues $\lambda_1,\ldots,\lambda_6$, the partial derivatives form a regular sequence with eigenvalues $\lambda_1^{-1},\ldots,\lambda_6^{-1}$. Their Koszul resolution gives
\begin{equation}\label{eq:jacobian-character}
\sum_{d\geq0}\tr(M\mid R_{F,d})t^d
=\prod_{j=1}^6\frac{1-\lambda_j^{-1}t^2}{1-\lambda_jt}.
\end{equation}
Griffiths' residue formula gives
\[
\tr(M\mid\Lambda_0\otimes\CC)
=\det(M)\sum_{d\in\{0,3,6\}}\tr(M\mid R_{F,d}).
\]
\begin{prop}\label{prop:equation-character}
Let $H\subset\Aut(F)$ be a finite subgroup containing $\mu_3 I_6$, let $H^s=H\cap\SL(6,\CC)$, and write $T_H=\Lambda_0^{H^s}$. For $g\in H$ and $k\geq0$,
\[
\tr(g^k\mid T_H\otimes\CC)
=\frac{1}{|H^s|}\sum_{h\in H^s}
\tr(g^kh\mid\Lambda_0\otimes\CC),
\]
where each trace on the right is computed from \eqref{eq:jacobian-character} and the residue formula. These traces determine the characteristic polynomial of $g|_{T_H}$, and $\chi(g)=\det(g)$. If $H^s$ maps onto $\Auts(X)$, then $T_H=T_X$.
\end{prop}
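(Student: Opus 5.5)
The proof rests on one structural fact: the averaging operator over $H^s$ is a projection. Since $H$ acts on $X=V(F)$ by projective automorphisms, it acts on $\Lambda_0$, and scalar matrices act trivially, so we may work with $H$ directly. Set
\[
p=\frac{1}{|H^s|}\sum_{h\in H^s}h\quad\text{on }\Lambda_0\otimes\CC.
\]
This is the idempotent projecting onto the invariants $(\Lambda_0\otimes\CC)^{H^s}=T_H\otimes\CC$; the equality holds because invariants of a finite group commute with extension of scalars. The subgroup $H^s=H\cap\SL(6,\CC)$ is normal in $H$ as the kernel of $\det$, so every $g\in H$ commutes with $p$ and preserves $T_H$. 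We then compute
\[
\tr(g^k\mid T_H\otimes\CC)=\tr(g^kp\mid\Lambda_0\otimes\CC)=\frac{1}{|H^s|}\sum_{h\in H^s}\tr(g^kh\mid\Lambda_0\otimes\CC).
\]
The first equality holds because $g^k p$ acts as $g^k$ on the image of $p$ and as $0$ on its kernel. This proves the displayed formula.

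Each summand $\tr(g^kh\mid\Lambda_0\otimes\CC)$ is computable from the Griffiths residue identity. Since $g^kh\in\Aut(F)$, equation \eqref{eq:jacobian-character} applies to it. We extract the coefficients of $t^0,t^3,t^6$ of the rational function there, expanded as a power series, and multiply their sum by $\det(g^kh)$.

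The traces determine the characteristic polynomial by a standard argument. The traces $\tr(g^k|_{T_H})$ for $k=1,\dots,r$, where $r=\rk T_H$ and $\rk T_H=\tr(p)$ is the case $k=0$, are the power sums of the eigenvalues of $g|_{T_H}$. Newton's identities then recover the elementary symmetric functions, and hence the characteristic polynomial. Alternatively, since $g$ has finite order $n$, the traces for $k=0,\dots,n-1$ give the multiplicity of each $n$-th root of unity by character orthogonality.

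The claim $\chi(g)=\det(g)$ is the residue formula with $\eta(g)=1$, since $g\in\Aut(F)$.

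For the final statement, assume $H^s$ maps onto $\Auts(X)$. Every element of $H^s$ has determinant $1$, so it acts trivially on $H^{3,1}$ and lands in $\Auts(X)$. Surjectivity then shows that the images of $H^s$ and $\Auts(X)$ in $O(\Lambda_0)$ coincide. Hence their invariant lattices agree, $T_H=\Lambda_0^{\Auts(X)}=T_X$.

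The only point requiring care is normality of $H^s$ in $H$, which ensures that $g$ preserves $T_H$. It is immediate from the fact that $H^s$ is the kernel of a homomorphism.
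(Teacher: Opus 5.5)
Your proof is correct and follows the same route as the paper. The average over $H^s$ is the projector onto $T_H\otimes\CC$, and it commutes with $g$, which you rightly justify by normality of $H^s=\ker(\det|_H)$. Multiplying by $g^k$ and taking traces gives the formula, and the traces of all powers determine the characteristic polynomial. You also spell out two points that the paper's three-line proof leaves implicit: $\chi(g)=\det(g)$ follows from $\eta=1$, and $T_H=T_X$ because the images of $H^s$ and $\Auts(X)$ in $O(\Lambda_0)$ coincide.
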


\begin{proof}
Averaging over $H^s$ is the projector onto $T_H\otimes\CC$ and commutes with $g$. Taking its product with $g^k$ gives the stated trace. The traces of all powers determine the eigenvalue multiplicities, and hence the characteristic polynomial.
\end{proof}

\section{Distinguishing families}\label{sec:two-pairs}

There are two families with symplectic group $M_{10}$ and two with automorphism group $L_2(7):2$. We distinguish them by their primitive embeddings and the involutions induced on $T$ by non-symplectic automorphisms, respectively. The two $S_{3,3}$ and two $A_{3,3}$ symplectic families are distinguished by their groups and traces. We write $x_{ijk}$ for $x_ix_jx_k$.

\subsection{The two $M_{10}$ families}

There are exactly two smooth cubic fourfolds with symplectic group $M_{10}$ and non-symplectic index one, representing $\mathscr{F}_5$ and $\mathscr{F}_6$ \cite[Theorem~1.8(4)]{laza2022automorphisms}. They have isometric $S$ and $T$, with $T\cong\langle-12\rangle\oplus\langle-30\rangle$, but different primitive embeddings \cite[Proposition~4.2]{FZ26}. We use Koike's equations \cite[(2.3) and (8.6)]{KOIKE202512}\footnote{The coefficients $4,5,8$ in \cite[(2.8)]{KOIKE202512} should be $-4,-5,-8$, respectively.}:
\[
F^M_\pm=f-(7\pm3\sqrt6)g
\]
where
\begin{align*}
f={}&2\sum_{i=1}^6x_i^3+3(x_{112}+x_{122}+x_{334}+x_{344}+x_{556}+x_{566})\\
  &{}+2(x_{235}+x_{145}+x_{136}+x_{246})+4(x_{135}+x_{245}+x_{236}+x_{146}),\\
g={}&3(x_{123}+x_{124}+x_{125}+x_{126}+x_{134}+x_{135}+x_{156}\\
  &\qquad+x_{234}+x_{256}+x_{356}+x_{346}+x_{345}+x_{456})\\
  &{}-4(x_{145}+x_{136}+x_{235}+x_{246})-5(x_{146}+x_{236}+x_{245})-8x_{135}.
\end{align*}
Write $X^M_\pm=V(F^M_\pm)$. Let $e_1,e_2$ be the orthogonal basis of $T$ with squares $-12,-30$, respectively. For an embedding $i:S\oplus T\hookrightarrow\Lambda_0$ with primitive images of $S,T$, let $H_T\subset A_T$ be the projection of the gluing subgroup to $A_T$.

\begin{lem}\label{prop:m10}
\begin{enumerate}
    \item There exist embeddings $i_1,i_2:S\oplus T\hookrightarrow\Lambda_0$ with images of $S,T$ primitive, whose gluing subgroups satisfy $(H_T^{(1)})^\perp=\langle e_1/3\rangle$ and $(H_T^{(2)})^\perp=\langle e_2/3\rangle$. Moreover, $\operatorname{div}_{\Lambda_0}(i_1(e_1))=3$, $\operatorname{div}_{\Lambda_0}(i_2(e_1))=1$, $\disc((i_1(e_1))^\perp)=-4$ and $\disc((i_2(e_1))^\perp)=-36$.
    \item Let $X$ be one of $X^M_\pm$ and let $\iota([x_1:\cdots:x_6])=[\overline{x}_1:\cdots:\overline{x}_6]$ be complex conjugation in the coordinates of $F^M_\pm$. The pair $(S_X,T_X)$ is conjugate in $O(\Lambda_0)$ to $(i_j(S),i_j(T))$ for $j=1$ or $2$. The induced action of $\iota$ on $\Lambda$ fixes $h^2$ and preserves $T_X$. Under this identification, $T^\iota=\langle e_j\rangle$.
\end{enumerate}
\end{lem}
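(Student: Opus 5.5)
Part (1) is a finite Nikulin-style computation, and part (2) identifies which embedding each $X^M_\pm$ realizes, using the real structure.

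For (1), I will work with the discriminant forms. Since $T\cong\langle-12\rangle\oplus\langle-30\rangle$, we have $A_T\cong\ZZ/12\oplus\ZZ/30$. Since $\Lambda_0$ has discriminant group $\ZZ/3$ and $\rk S=20$, the gluing $\Lambda_0/(S\oplus T)$ is the graph of an anti-isometry $\gamma:H_S\to H_T$. Moreover $H_T^\perp/H_T$, computed in $A_T$, is isomorphic to $A_{\Lambda_0}$ up to sign conventions.

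I will classify the subgroups $H_T\subset A_T$ with $H_T^\perp/H_T\cong\ZZ/3$ carrying the correct form, equal to $-q_{\Lambda_0}$ restricted appropriately. Index considerations force $H_T$ to have order $120$, so $H_T^\perp$ has order $3$. The $3$-part of $A_T$ is $\langle e_1/3\rangle\oplus\langle e_2/3\rangle$, with $q(e_1/3)=-12/9=-4/3$ and $q(e_2/3)=-30/9=-10/3$ modulo $2\ZZ$. The order-$3$ candidates for $H_T^\perp$ are the isotropy-free lines in this $3$-part, and the two lines $\langle e_1/3\rangle$ and $\langle e_2/3\rangle$ occur among them.

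For each candidate, I will take $S$ from \cite[Proposition~4.2]{FZ26} and exhibit an anti-isometry $H_S\to H_T$; the required $\gamma$ exists on the prime-to-$3$ parts by the existence of the embeddings in \cite{FZ26}. The overlattice is then even of discriminant $3$ and signature $(20,2)$, hence isometric to $\Lambda_0$ by uniqueness in the indefinite genus. In practice this is the OSCAR computation, producing explicit $i_1,i_2$.

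The divisibility statements follow from the gluing. The vector $e_1/\operatorname{div}$ lies in $\Lambda_0^\vee$, and $\operatorname{div}_{\Lambda_0}(i(e_1))=3$ exactly when $e_1/3$ gives a class in $A_{\Lambda_0}$. This happens precisely when $e_1/3\in H_T^\perp$ and it is not glued, which is the case for $i_1$; for $i_2$ the divisibility is $1$. Then $\disc(e_1^\perp)=\disc(\Lambda_0)\cdot(e_1^2)/\operatorname{div}^2$ in absolute value, which gives $3\cdot12/9=4$ and $3\cdot12=36$. With the sign from the signature change, these are $-4$ and $-36$.

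For (2), I first use that $S_X\cong S$, $T_X\cong T$, and that the embedding is one of the two in (1) by \cite[Proposition~4.2]{FZ26}. The coefficients of $F^M_\pm$ are real, so $\iota$ is an antiholomorphic involution of $X$. It acts on $\Lambda$ by an isometry, up to the sign $(-1)^2=1$ on $H^4$, and fixes $h^2$. It sends $H^{3,1}$ to $H^{1,3}$, so it normalizes $\Aut(X)$ and the symplectic subgroup acting on $H^{3,1}$. Hence it preserves $T_X=\Lambda_0^{G^s}$.

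On the rank-$2$ lattice $T_X$, the span of $\operatorname{Re}\sigma$ and $\operatorname{Im}\sigma$ is all of $T_X\otimes\mathbb R$ and is negative definite. The map $\iota$ swaps $\sigma$ and $\overline\sigma$, so it acts as a reflection with eigenvalues $\pm1$. Thus $T^\iota$ has rank $1$, and it is spanned by a primitive vector of $T$ whose orthogonal complement in $T$ is also the $(-1)$-eigenlattice.

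The isometries of $\langle-12\rangle\oplus\langle-30\rangle$ are just the sign changes, so $T^\iota=\langle e_1\rangle$ or $\langle e_2\rangle$. It remains to decide which one occurs, and to correlate this with $j$. I will use that $\iota$ extends to $\Lambda_0$ preserving $S$ and $T$, so its action on $A_T$ preserves $H_T^\perp$. I expect this alone to be insufficient, since both sign changes preserve both lines. The identification must therefore come from the divisibility computed in (1): the class $v\in T^\iota$ has $\operatorname{div}_{\Lambda_0}(v)$ determined by the embedding type.

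The main obstacle is the concrete computation of the $\iota$-invariants for each of $X^M_\pm$. I would compute the trace of $\iota$ on $H^4$, or on $T$, via a Lefschetz-type count for the real locus: the trace of $\iota^*$ equals $\chi(X(\mathbb R))$ up to sign. Combined with the trace on $S$, obtained from how $\iota$ conjugates the $M_{10}$-action in terms of the explicit equations, this pins down which sign change occurs. Alternatively, I would compute numerically the periods $\operatorname{Re}\sigma$ and $\operatorname{Im}\sigma$ and test which of them pairs integrally with a divisibility-$3$ class. I would first try the real-locus Euler characteristic, computed in OSCAR.
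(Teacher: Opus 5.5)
Your part (1) is essentially the paper's argument: you classify $H_T$ with $H_T^\perp$ of order $3$ carrying $q_{\Lambda_0}$, realize both gluings, read off divisibilities, and use $|\disc(v^\perp)|=3\cdot 12/\operatorname{div}(v)^2$. One small slip: the residual group is $H_T^\perp$ itself, not $H_T^\perp/H_T$, because $A_T=H_T\oplus H_T^\perp$ and the glue is not isotropic in $A_T$.

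\textbf{The gap in part (2).} It occurs at the step you flag as insufficient.

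You already know that $\iota^*$ is an isometry of $\Lambda$ with $\iota^*h^2=h^2$, but you do not draw the consequence that decides the sign. Since $\Lambda$ is unimodular, the gluing of $\langle h^2\rangle\oplus\Lambda_0$ gives an $\iota^*$-equivariant isomorphism $A_{\Lambda_0}\cong A_{\langle h^2\rangle}=\langle h^2/3\rangle$. Hence $\iota^*$ acts trivially on $A_{\Lambda_0}$.

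The glue group projects onto all of $A_S$. So for $\langle e_j/3\rangle=H_T^\perp$, the class of $(0,e_j/3)$ is a generator of $A_{\Lambda_0}$; this is the same fact that gives $\operatorname{div}_{\Lambda_0}(e_j)=3$. If $\iota^*e_j=-e_j$, triviality on $A_{\Lambda_0}$ would force $2e_j/3\in\Lambda_0$, which is false. Therefore $\iota^*|_T=\pm\diag(1,-1)$ must fix $e_j$ and negate the other basis vector.

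This argument is uniform for any real cubic with this symmetry. Deciding which $j$ belongs to $X^M_+$ or $X^M_-$ is not part of this lemma; the paper does that afterwards by parallel transport from $X_0$.

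\textbf{Why your fallback fails.} By Lefschetz,
$\chi(X(\mathbb R))=\tr(\iota^*\mid H^4)=1+\tr(\iota^*|_S)+\tr(\iota^*|_T)$.
Both candidates $\diag(1,-1)$ and $\diag(-1,1)$ on $T$ have trace $0$ and the same characteristic polynomial. So no Euler-characteristic or trace data, even combined with information on $S$, can separate them. The distinction is arithmetic (which eigenvector has divisibility $3$), not spectral.

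The numerical-period alternative is possible in principle. However, it would require computing periods against an integral basis of $H^4$, which is far more work than the two-line discriminant-group argument above.
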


\begin{proof}
For (1), we have $\disc S=120$ and $\disc T=360$, so the gluing subgroup has order $120$ and projects onto $A_S$. By \cite[Proposition~1.5.1]{Nikulin_1980}, its other projection $H_T\subset A_T$ must admit an anti-isometry with $A_S$. The gluing calculation in \path{remark/catalogue/verify_m10_gluings.jl} gives exactly two possibilities for $H_T$, with $(H_T^{(1)})^\perp=\langle e_1/3\rangle$ and $(H_T^{(2)})^\perp=\langle e_2/3\rangle$. Both gluings occur in $\Lambda_0$ by the classification of primitive embeddings in \cite[\S4.5]{laza2022automorphisms}. In both cases $A_T=H_T\oplus H_T^\perp$, with $H_T^\perp\cong A_{\Lambda_0}$. The calculation also gives divisibilities $3,1$ for the image of $e_1$. Since $e_1^2=-12$, we have
\[
\disc((i_j(e_1))^\perp)
=\frac{-36}{\operatorname{div}_{\Lambda_0}(i_j(e_1))^2}.
\]
The conclusion follows.

For (2), the map $O(S)\to O(q_S)$ is surjective by \cite[Lemma~4.25]{laza2022automorphisms}, so for a fixed $H_T$ all anti-isometries $A_S\to H_T$ give equivalent gluings. Thus the embedded pair $(S_X,T_X)$ is conjugate to one of the two pairs in (1). We use this identification for the rest of the proof.

The map $\iota$ is antiholomorphic and preserves the real orientation because $\dim_{\CC}X=4$, so its pullback preserves the cup product on $\Lambda$. Since $\iota$ normalizes $\Auts(X)$, it preserves $T_X$. Its induced complex-linear action exchanges $H^{3,1}$ and $H^{1,3}$, so it reverses the orientation of $T_X\otimes\mathbb R$. The only such isometries of $\langle-12\rangle\oplus\langle-30\rangle$ reversing orientation are $\pm\operatorname{diag}(1,-1)$. Also, $\iota(h)=-h$ and $\iota(h^2)=h^2$. Since $\Lambda$ is unimodular, $\iota$ acts trivially on $q_{\Lambda_0}$, and further on $H_{T_X}^\perp=\langle e_j/3\rangle$. Therefore $\iota$ fixes $e_j$ and negates the other basis vector of $T$, which implies the conclusion.
\end{proof}

\begin{prop}\label{prop:m10-identification}
\begin{enumerate}
    \item The cubic $X^M_+$ is linearly equivalent to $X^1(M_{10})$ in \cite[(4.3)]{laza2022automorphisms}, and $X^M_-$ to its Galois conjugate under $\zeta_{24}\mapsto\zeta_{24}^7$.
    \item Let $X_t=V(f-tg)$ for $t\in \CC$. Then $X_t$ is smooth for $7-3\sqrt{6}\leq t\leq 0$.
    \item Write $\iota_0,\iota_+,\iota_-$ for coordinate-wise complex conjugation on $X_0,X^M_+,X^M_-$, respectively. Under identifications $T_{X^M_\pm}\cong T$, we have $(T_{X^M_+})^{\iota_+}=\langle e_1\rangle$ and $(T_{X^M_-})^{\iota_-}=\langle e_2\rangle$. The pairs $(S_X,T_X)$ for $X=X^M_+$ and $X=X^M_-$ are conjugate in $O(\Lambda_0)$ to $(i_1(S),i_1(T))$ and $(i_2(S),i_2(T))$, respectively.
\end{enumerate}
\end{prop}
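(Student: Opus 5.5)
For (1), I would exhibit an explicit linear change of coordinates. Koike's $F^M_\pm$ and the Laza--Zheng cubic $X^1(M_{10})$ in \cite[(4.3)]{laza2022automorphisms} both carry a symplectic $M_{10}$-action, and the $6$-dimensional representations realizing $M_{10}\subset\PGL(6,\CC)$ are few. I would compute the linear groups $H^s$ for both equations and find $A\in\GL(6,\CC)$ conjugating one to the other. Since the invariant cubic space $W$ for this group is small, I would then solve $F^M_+(Ax)=c\,F^{1}(x)$ for $c$. Alternatively, I would compare invariants: both cubics are isolated points of $\calM$ with the same symmetry, and there are exactly two such points \cite[Theorem~1.8(4)]{laza2022automorphisms}, so it suffices to show $X^M_+\not\cong X^M_-$ and match one of them. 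The Galois statement follows because $F^M_\pm$ are Galois conjugate under $\sqrt6\mapsto-\sqrt6$. This is compatible with $\zeta_{24}\mapsto\zeta_{24}^7$, since that automorphism sends $\sqrt6=\zeta_{24}^{\,\cdot}$-expression to its negative; I would check this by writing $\sqrt6$ in $\QQ(\zeta_{24})$.

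For (2), I would show smoothness on the real segment $t\in[7-3\sqrt6,0]$ by computation. The discriminant of the pencil $f-tg$ is a polynomial in $t$. It is too big to compute naively, so I would use the $M_{10}$-symmetry, or the subgroup preserving the pencil, to reduce the singular-point equations. I would then compute, via Gröbner bases over $\QQ[t]$ (in OSCAR), the elimination ideal of $(\partial_i(f-tg))_i$ in $t$. The claim is that its real roots avoid the interval. Note that $7-3\sqrt6\approx-0.348$, and the other Koike parameter $7+3\sqrt6$ lies outside the interval.

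For (3), I would use Lemma~\ref{prop:m10}(2). This requires knowing which of $\langle e_1\rangle,\langle e_2\rangle$ is fixed by $\iota_\pm$. The idea is continuity along the real path in (2). For real $t$ the conjugation $\iota_t$ is an antiholomorphic involution of $X_t$, varying continuously, so $\iota_t^*$ is locally constant in a trivialization of $\Lambda$ over the segment. The family $\{X_t\}$ need not preserve $T$. So I would instead track the sign of the invariant $\tr(\iota^*\mid\Lambda)$, or rather the discriminant of $\Lambda_0^{\iota}$, which is constant along the segment. This invariant is computable at $t=0$ from the real topology of $X_0(\mathbb R)$ via Lefschetz/Krasnov: $\tr\iota^*=\chi(X(\mathbb R))$ together with signature formulas. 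On the $\iota$-stable $T_X$, the parts $T^\iota=\langle e_j\rangle$ determine $\disc(\Lambda_0^{\iota})$ differently according to $\operatorname{div}(i_j(e_1))$ and $\disc((i_j(e_1))^\perp)$ from Lemma~\ref{prop:m10}(1). This gives $j=1$ for $X^M_+$, which is joined to $X_0$. For $X^M_-$, I would use the already known fact that the two cubics realize different embeddings $i_1,i_2$ \cite[Proposition~4.2]{FZ26}, together with (1), so $X^M_-$ gets the other one. The fixed lattice then follows from Lemma~\ref{prop:m10}(2).

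The main obstacle is (3): I need an invariant of $\iota^*$ on all of $\Lambda$ that is computable at $X_0$ and actually separates $j=1$ from $j=2$. I would first try computing $\Lambda^{\iota}$ numerically through the real locus of $X_0$, which is perhaps a nice real cubic such as the Fermat-type piece of $f$. If that fails, I would compute the action of $\iota$ on $R_F$ combined with the known $M_{10}$ character to pin down the eigenvalue on each $e_k$.
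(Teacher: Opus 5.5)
Your outlines for (1) and (2) are acceptable as computational plans. For (2), the symmetry common to all members of the pencil is the $3.A_6$ action, not $M_{10}$. The paper keeps the computation small with a B\'ezout bound: a finite singular locus has at most $2^5=32$ points, so each singular point has an $A_6$-stabilizer of index at most $32$ and lies on a common eigenline of that stabilizer. A positive-dimensional singular locus is caught on a hyperplane section instead. The gap is in (3), and it has two parts.

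\textbf{The transport goes to the wrong cubic.} The smooth real segment of (2) is $[7-3\sqrt6,0]$, and $X_{7-3\sqrt6}=X^M_-$, not $X^M_+$. The cubic $X^M_+$ sits at $t=7+3\sqrt6\approx14.35$, and (2) gives no smooth real path from $X_0$ to it. In fact the singular parameters $t=-1,1/4,7$ separate $0$ from $7+3\sqrt6$ on the real projective line. For non-real $t$, coordinate conjugation maps $X_t$ to $X_{\bar t}$, so it is not an involution of $X_t$ there. Hence anything you compute at $X_0$ is an invariant of $X^M_-$. The conclusion to aim for is $j=2$ for $X^M_-$; non-isomorphism of $X^M_\pm$ then assigns $i_1$ to $X^M_+$.

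\textbf{No invariant is produced that both survives the transport and is determined by $j$.} This is the more serious problem. The class of $\Lambda_0^{\iota}$, or $\tr(\iota^*\mid\Lambda)$, is constant along a real path, but nothing ties it to $j$. The lattice $\Lambda_0^\iota$ is an overlattice of $S^\iota\oplus T^\iota$, and the gluing fixes $\iota|_S$ only modulo $\widetilde O(S)$. That is, $\iota$ is determined only up to replacing it by $\iota\circ g$ with $g\in M_{10}$, which does not change $T^\iota$ but in general changes $S^\iota$. The quantities $\operatorname{div}_{\Lambda_0}(i_j(e_1))$ and $\disc((i_j(e_1))^\perp)$ concern a single vector and do not compute $\Lambda_0^\iota$.

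The evaluation at $X_0$ is also not available by your means. The Lefschetz formula gives only $\tr(\iota_0^*\mid\Lambda)$, hence ranks. The Jacobian ring cannot locate the integral classes $e_k$: $\iota^*$ exchanges $H^{3,1}$ and $H^{1,3}$, and the real question is where $e_k$ sits relative to the real and imaginary parts of the period.

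\textbf{The missing idea: the rank-three lattice $L=\Lambda_0^{A_6}$.} You remarked that the pencil need not preserve $T$; what it does preserve is $L$. Since every $X_t$ carries the same $A_6$ action, $L$ is locally constant along the segment. Conjugation normalizes $A_6$, so it preserves $L$, and $L$ contains both $T_{X_0}$ and $T_{X^M_-}$. The argument then runs as follows.
\begin{enumerate}
\item At $X_0$, the non-Clebsch $A_7$ cubic, $T_{X_0}$ has a basis $u_1,u_2$ with $u_i^2=-18$ and $(u_1,u_2)=-3$. Put $v=u_1-u_2$ and $w=u_1+u_2$.
\item Orientation reversal, together with triviality on the residual group $\langle[v/3]\rangle\cong A_{\Lambda_0}$, forces $\iota_0(v)=v$ and $\iota_0(w)=-w$.
\item Let $U=S_{X_0}\cap L$. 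The $7$-primary gluing of $[w/7]$ with $A_U$ forces $\iota_0=-1$ on $U$, so $L^{\iota_0}=\langle v\rangle\cong\langle-30\rangle$.
\item Transport gives $L^{\iota_-}\cong\langle-30\rangle$. The positive complement of $T_{X^M_-}$ in $L$ cannot be fixed, so $(T_{X^M_-})^{\iota_-}=L^{\iota_-}$ has square $-30$ and equals $\langle e_2\rangle$.
\end{enumerate}
Lemma~\ref{prop:m10}(2) then gives $i_2$ for $X^M_-$.
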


\begin{proof}
(1) is checked in \path{remark/catalogue/verify_m10_lz_koike_conjugacy.g}. It also checks that $f,g$ are invariant under the two matrices in \cite[(2.7)]{KOIKE202512}.

For (2), we show that if $X_t$ is singular for $t\in\CC$, then
\[
(t+1)(4t-1)(t-7)(8t^2+5t+5)=0.
\]
This equation has no root in $[7-3\sqrt6,0]$.

A positive-dimensional singular locus must meet the hyperplane $x_1+2x_2+\cdots+6x_6=0$. In each chart $x_k=1$, eliminating the coordinates from the six partial-derivative equations and this hyperplane equation gives $\langle8t^2+5t+5\rangle$; see \path{remark/catalogue/m10_surface_slope_exact.sing}. Thus $8t^2+5t+5=0$ in this case.

Otherwise the singular locus is finite. Five general linear combinations of the quadratic partial derivatives have a finite common zero set, so B\'ezout's theorem bounds the number of singular points by $2^5=32$. Thus a singular point has a stabilizer of index at most $32$ in $A_6$, whose preimage in $3.A_6$ preserves the line in $\CC^6$ representing that point. For each conjugacy class of such subgroups, \path{remark/catalogue/m10_projective_subgroups.g} computes the common eigenlines and solves $\nabla f=t\nabla g$ on them. The only finite parameters obtained are $t=-1,1/4,7$.

For (3), the cubic $X_0=V(f)$ is the non-Clebsch $A_7$ cubic, with invariant lattice of rank two
\[
T_{X_0}\cong-\begin{bmatrix}18&3\\3&18\end{bmatrix}
\]
by \cite[Theorem~1.8(2)]{laza2022automorphisms}. Let $u_1,u_2$ be a basis with this Gram matrix, and write $v=u_1-u_2$, $w=u_1+u_2$. Then $v^2=-30$ and $w^2=-42$. We have $\disc S_{X_0}=105$ and $|A_{T_{X_0}}|=315$ \cite[Table~1]{FWZ26}. 
By \cite[Proposition~3.4(2)]{FWZ26}, we have
\[
q_{T_{X_0}}\cong(-q_{S_{X_0}})\oplus3^{-1},
\]
where $3^{-1}=q_{\Lambda_0}$. Thus $A_{\Lambda_0}$ corresponds to the residual subgroup $H_{T_{X_0}}^\perp\subset A_{T_{X_0}}$, whose generator has quadratic value $2/3$. Of the four subgroups of order three in $A_{T_{X_0}}$, only $\langle[v/3]\rangle$ has this quadratic form. Hence $v/3$ represents a generator of $A_{\Lambda_0}$.

The same argument as in the proof of Lemma~\ref{prop:m10}(2) shows that $\iota_0$ fixes $[v/3]\in A_{\Lambda_0}$ and reverses the orientation of $T_{X_0}\otimes\mathbb R$. The only orientation-reversing isometries of $T_{X_0}$ are $\pm\left[\begin{smallmatrix}
    0 & 1\\
    1 & 0
\end{smallmatrix}\right]$. Combining this with $\iota_0([v/3])=[v/3]$, we obtain $\iota_0(v)=v$ and $\iota_0(w)=-w$.

The space of invariant cubic forms for the $3.A_6$ representation in \cite[(2.7) and (2.9)]{KOIKE202512} is $\CC f\oplus\CC g$. Its smooth members belong to the $A_6$ family $\mathscr{F}_{10}$, so every smooth $X_t$ has this common $A_6$ action. Let $L=\Lambda_0^{A_6}$ be its invariant lattice, of rank three and discriminant $540$ \cite[Table~1]{FWZ26}. Complex conjugation normalizes this action, so $\iota_0$ preserves $L$.

Let $U=S_{X_0}\cap L$ and $m=[L:T_{X_0}\oplus U]$. Since $\disc T_{X_0}=315$, a generator of $U$ has square $12m^2/7$, so $7\mid m$. The nontrivial $7$-primary gluing pairs the class of $w/7$ in $A_{T_{X_0}}$ with a class of order $7$ in $A_U$. The lattice $U$ is preserved by $\iota_0$ and has rank one. Since $\iota_0$ negates $w/7$, the gluing graph forces $\iota_0$ to act as $-1$ on $U$. Thus $L_{\mathbb Q}^{\iota_0}=\mathbb Qv$. The lattice $T_{X_0}$ is primitive in $L$, and $v$ is primitive in $T_{X_0}$, so $L^{\iota_0}=\langle v\rangle\cong\langle-30\rangle$.

Parallel transport along the smooth real path from $X_0$ to $X^M_-$ gives an isometry $\tau$ between the two copies of $L$, with
\[
\tau\circ\iota_0=\iota_-\circ\tau.
\]
Hence $L^{\iota_-}\cong\langle-30\rangle$. The positive orthogonal complement of $T_{X^M_-}$ in $L$ has no fixed vector, so $(T_{X^M_-})^{\iota_-}=L^{\iota_-}$. By Lemma~\ref{prop:m10}(2), $X^M_-$ corresponds to $i_2$, and $X^M_+$ to $i_1$ since the two cubics are not isomorphic \cite[\S8(vi)]{KOIKE202512}.
\end{proof}

The gluing calculation and the correspondence with the equations are recorded in \path{remark/catalogue/m10_gluings.md}.

\subsection{The two $L_2(7):2$ families}
The moduli image $\mathcal Z_{12}$ of the symplectic family $\mathscr{F}_{12}$ with group $L_2(7)$ contains exactly two isomorphism classes of cubic fourfolds with automorphism group $L_2(7):2$, corresponding to $\mathscr{F}_{13}$ and $\mathscr{F}_{14}$ \cite[Theorem~1.2(5)]{he2025cubicfourfoldsorder7automorphism}. In the model with seven coordinates of \cite[\S3.4]{he2025cubicfourfoldsorder7automorphism}, let $F_0=\sum_{i=1}^{7}z_i^3$ and
\[
F_1=z_1z_2z_4+z_2z_3z_5+z_3z_4z_6+z_1z_5z_6+z_1z_3z_7+z_4z_5z_7+z_2z_6z_7.
\]
Write $X^L_t=\{F_0+tF_1=0=\sum_i z_i\}$ and $X^L_\pm=X^L_{\pm3\sqrt2/2}$. For the common $L_2(7)$ action on the smooth members, let $T=\Lambda_0^{L_2(7)}$ and $S=T^\perp$. In a basis of $T$, its Gram matrix is
\[
\begin{bmatrix}-2&1&0\\1&10&0\\0&0&-28\end{bmatrix}.
\]
We write vectors as rows in the displayed basis.

\begin{prop}\label{prop:l27}
Let $v_1,v_2\in T$ have coordinates $(1,2,0)$ and $(7,-7,3)$, respectively, and let $r_j(x)=x-(x,v_j)v_j/21$ for $x\in T$.
\begin{enumerate}
    \item The involutions $-r_1,-r_2$ preserve $T$, with $T^{-r_j}=\langle v_j\rangle\cong\langle42\rangle$. Their anti-invariant lattices are $\langle-2\rangle\oplus\langle-28\rangle$ and $\langle-4\rangle\oplus\langle-14\rangle$, respectively. In particular, they are not conjugate in $O(T)$.
    \item Let $g_\pm\in\Aut(X^L_\pm)\setminus\Auts(X^L_\pm)$. Their restrictions to $T$ are conjugate in $O(T)$ to $-r_1$ and $-r_2$, respectively. Thus the anti-invariant lattice contains a vector of square $-2$ for $g_+|_T$, but not for $g_-|_T$.
\end{enumerate}
\end{prop}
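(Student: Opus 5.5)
The plan is to prove (1) by direct computation with the displayed Gram matrix $G$, and to prove (2) by pinning down $g_\pm|_T$ first up to a finite list and then telling the two cases apart with a geometric argument.

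\textbf{Part (1).} First check that $v_1^2=v_2^2=42$. Next compute $Gv_1^{\mathsf T}=(0,21,0)$ and $Gv_2^{\mathsf T}=(-21,-63,-84)$. Hence $(x,v_j)\in 21\ZZ$ for all $x\in T$, so $r_j$ is an integral reflection. Since $r_j(v_j)=-v_j$, the involution $-r_j$ fixes $v_j$ and acts as $-1$ on $v_j^\perp$. Therefore $T^{-r_j}=\langle v_j\rangle$, and the anti-invariant lattice is $(v_j)^\perp_T$.

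For $j=1$ the complement is $\{x_2=0\}$, spanned by $(1,0,0)$ and $(0,0,1)$, with Gram matrix $\diag(-2,-28)$. For $j=2$ it is $\{a+3b+4c=0\}$. Reducing a basis such as $(-3,1,0),(-4,0,1)$ should give $\langle-4\rangle\oplus\langle-14\rangle$. A consistency check is $\disc T=588=42\cdot 56/2^2$.

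The two anti-invariant lattices are not isometric, because $-4a^2-14b^2=-2$ has no integer solution. Conjugate involutions would have isometric anti-invariant lattices, so $-r_1$ and $-r_2$ are not conjugate in $O(T)$.

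\textbf{Part (2), reduction to two candidates.} Each $g_\pm$ is non-symplectic with non-symplectic index $2$, so $g_\pm|_T$ is an involution acting by $-1$ on $H^{3,1}$. Its $(-1)$-eigenspace therefore contains the negative definite plane of $T\otimes\mathbb R$. Since $T$ has signature $(1,2)$, the invariant part is a rank-one positive lattice. This can be confirmed by the trace formula of Proposition~\ref{prop:equation-character}, which should give $\tr(g|_T)=-1$.

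The isometry $g$ lies in $\widetilde O(\Lambda_0)$, so $g_T$ must admit a fitting extension. In particular, it must act trivially on $H_T^\perp\cong A_{\Lambda_0}$ and compatibly with the gluing to $A_S$, using surjectivity of $O(S)\to O(q_S)$ as in Lemma~\ref{prop:m10}. I would enumerate, in OSCAR, the $O(T)$-conjugacy classes of involutions of $T$ with positive rank-one invariant lattice satisfying this condition. The expected outcome is exactly the two classes of $-r_1$ and $-r_2$.

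The two cubics $X^L_\pm$ are not isomorphic, so they cannot give conjugate lattice actions. The extra isometry modulo $\Auts$ determines the family by Torelli. Hence one of $g_\pm|_T$ is conjugate to $-r_1$ and the other to $-r_2$. The ``in particular'' clause then follows from part (1).

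\textbf{Part (2), deciding which is which.} This is the main obstacle, since the two cases are exchanged by $\sqrt2\mapsto-\sqrt2$ and formal lattice data cannot separate them. I would imitate the proof of Proposition~\ref{prop:m10-identification}.

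First, write down explicit $g_\pm$, for instance a signed coordinate permutation or a substitution of the $z_i$ normalizing $L_2(7)$. The aim is to make $g_\pm$ real, or to compose it with complex conjugation so that a real structure is preserved along the real segment of parameters $t$.

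Second, show that $X^L_t$ is smooth on a real path joining $t=\pm3\sqrt2/2$ to a base point where the action is known. Natural base points are $t=0$, the Fermat-type cubic, or another special point of $\mathcal Z_{12}$ with larger group whose invariant lattice is tabulated in \cite{FWZ26}. Smoothness would be proved by a singular-locus elimination as in Proposition~\ref{prop:m10-identification}(2).

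Finally, use parallel transport to carry the invariant vector, or the fixed lattice of the real structure, to $X^L_\pm$. One then reads off whether the anti-invariant lattice represents $-2$. If the path approach stalls, a fallback is to locate a concrete algebraic class, such as a plane or cubic scroll invariant under $g_+$, whose projection gives the $(-2)$-vector in the anti-invariant part for $X^L_+$.
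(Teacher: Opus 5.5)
Your Part (1) is correct and is the paper's computation. Reducing $(-4,0,1)-2(-3,1,0)=(2,-2,1)$ gives $\diag(-14,-4)$. Your reduction of (2) to the two classes $-r_1,-r_2$ is also fine; the paper simply takes these two classes from \cite[\S6.1.1]{FZ26}. Your signature remark does not exclude $g|_T=-\mathrm{id}_T$, but the trivial action on $A_T[3]\cong A_{\Lambda_0}$ that you impose anyway removes it.

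The genuine gap is the assignment $g_+\mapsto -r_1$, which is the actual content of (2). You name the right tool (complex conjugation plus real parallel transport), but you do not say what is known about conjugation at the base point, nor how it constrains $g_+$. The missing argument runs as follows.
\begin{enumerate}
\item $X^L_0$ is the Clebsch--Segre cubic, not a Fermat-type one. It has $T_{X^L_0}\cong-\left[\begin{smallmatrix}2&1\\1&18\end{smallmatrix}\right]$, whose only $(-2)$-vectors are $\pm e$.
\item Since $\iota^L_0$ reverses the orientation of $T_{X^L_0}\otimes\mathbb R$, it acts there as $\pm r_e$.
\item $A_T$ has no $5$-primary part, and $r_e$ acts trivially on $A_{T_{X^L_0}}$. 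So the $5$-primary gluing with $U=(T_{X^L_0})^\perp\cap T$ forces the same sign on $U$, giving $\iota^L_0|_T=\pm r_e$.
\item Triviality on $A_T[3]$ then selects $\iota^L_0|_T=r_e$.
\item Transport along $[0,3\sqrt2/2]$, which is smooth by \cite[Table~1]{he2025cubicfourfoldsorder7automorphism}, gives $\iota^L_+|_T=r_e$ with $e^2=-2$.
\end{enumerate}

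So what must be transported is the anti-invariant line $\mathbb Z e$ of the real structure. Your alternative of carrying ``the invariant vector'' cannot work, because $g_\pm$ is not an automorphism of $X^L_t$ along the path.

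The last step is then short. A real non-symplectic lift, such as the matrix $E$ of \cite{he2025cubicfourfoldsorder7automorphism}, commutes with $\iota^L_+$, so $g_+$ preserves the negative line $\mathbb Q e$. Since the fixed part of $g_+|_T$ is a positive line, $g_+(e)=-e$. This places $g_+|_T$ in the class of $-r_1$, and $X^L_-$ then follows by elimination.

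Only the positive segment can be smooth. If $[-3\sqrt2/2,0]$ were smooth as well, the same argument would give $g_-(e)=-e$, contradicting (1).

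Your fallback is not viable. The anti-invariant lattice of $g_+|_T$ is $P$, with $P\otimes\mathbb R=(H^{3,1}\oplus H^{1,3})\cap\Lambda_{0,\mathbb R}$. It is therefore orthogonal to every algebraic class, so no plane or cubic scroll can project to a nonzero vector in it.
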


\begin{proof}
For (1), both $v_j$ are primitive, have norm $42$, and have divisibility $21$ in $T$. Thus $-r_j$ is integral, fixes $v_j$, and acts as $-1$ on $v_j^\perp$. The saturated orthogonal complements have bases $(1,0,0),(0,0,1)$ and $(-3,1,0),(2,-2,1)$, respectively. Their Gram matrices are $\diag(-2,-28)$ and $\diag(-14,-4)$. Only the first represents $-2$, so the two involutions are not conjugate. The eigenlattices of the two saved actions are checked in \path{remark/catalogue/verify_l27_split.jl}.

For (2), the restrictions of $g_\pm$ to $T$ do not depend on their choice, since $\Auts(X^L_\pm)$ acts trivially on $T$. Write $\iota^L_0,\iota^L_+$ for coordinate-wise complex conjugation on $X^L_0,X^L_+$, respectively. The cubic $X^L_0$ is the Clebsch--Segre fourfold, with
\[
T_{X^L_0}\cong-\begin{bmatrix}2&1\\1&18\end{bmatrix}
\]
by \cite[Corollary~6.9]{laza2022automorphisms}. Its only vectors of square $-2$ are $e$ and $-e$. Let $r_e\in O(T)$ be reflection in $e$, fixing $U=(T_{X^L_0})^\perp\cap T$. The restriction of $\iota^L_0$ to $T_{X^L_0}$ reverses orientation, so it is $r_e|_{T_{X^L_0}}$ or $-r_e|_{T_{X^L_0}}$. Since $A_T$ has no $5$-primary part, the $5$-primary parts of $A_{T_{X^L_0}}$ and $A_U$ are paired by the gluing anti-isometry. The reflection $r_e$ is trivial on $A_{T_{X^L_0}}$, so $\iota^L_0$ acts on $U$ with the same overall sign. Hence $\iota^L_0|_T=r_e$ or $-r_e$.

As in Lemma~\ref{prop:m10}(2), $\iota^L_0$ fixes $h^2$ and acts trivially on $A_{\Lambda_0}$. Since $|A_S|=196$ is prime to $3$, the residual subgroup is $A_T[3]\cong A_{\Lambda_0}$. The reflection $r_e$ acts trivially on $A_T$, whereas $-r_e$ negates $A_T[3]$. Thus $\iota^L_0|_T=r_e$.

The cubics $X^L_t$ are smooth for $0\le t\le3\sqrt2/2$ by \cite[Table~1]{he2025cubicfourfoldsorder7automorphism}. Parallel transport along this real interval carries $\iota^L_0|_T$ to $\iota^L_+|_T$. Under this identification, $\iota^L_+|_T=r_e$ and $e\in T_{X^L_+}$ still has square $-2$.

Let $E$ be the real matrix in \cite[Proposition~3.16 and \S4.2]{he2025cubicfourfoldsorder7automorphism}. It preserves $X^L_+$, is non-symplectic, and satisfies $E^2\in L_2(7)$. We may take $g_+$ to be the automorphism induced by $E^3$. Its restriction to $T$ equals that of $E$, and it commutes with $\iota^L_+$. Thus $g_+$ preserves $\mathbb Qe$, the $-1$-eigenspace of $\iota^L_+|_T$. On $T$, its square is the identity and the period plane lies in its $-1$-eigenspace. Moreover, $g_+|_T\ne-\mathrm{id}_T$ because it fixes $A_T[3]$. Its $+1$-eigenspace in $T\otimes\mathbb R$ is therefore a positive line. Since $e^2=-2$, we have $g_+(e)=-e$.

Among the two integral actions computed in \cite[\S6.1.1]{FZ26}, only $-r_1$ has an anti-invariant lattice containing a vector of square $-2$. This assigns $-r_1$ to $X^L_+$. Both classes are realized by smooth cubics, and there are exactly two such cubics up to isomorphism by \cite[Theorem~1.2(5)]{he2025cubicfourfoldsorder7automorphism}. An isomorphism preserves the integral action, so the second class belongs to $X^L_-$.
\end{proof}

The required gluing subgroup of index $3$ on the $T$ side is unique. Both involutions act trivially on $A_T[3]$, so the extension criterion of \cite[Proposition~6.3]{he2025cubicfourfoldsorder7automorphism} applies.

\subsection{The $S_{3,3}$ and $A_{3,3}$ families}
There are two $S_{3,3}$ symplectic families $\mathscr{F}_{35}$ and $\mathscr{F}_{37}$ of generic non-symplectic index two \cite{KOIKE202512,KOIKE2026}, \cite{FWZ26}. Their generic automorphism groups are $S_{3,3}\times C_2$ and $N_{72}$, with Sylow $2$-subgroups $C_2^3$ and $D_8$, respectively. At index six, the groups are $S_{3,3}\times C_6$ and $N_{72}\times C_3$ for $\mathscr{F}_{36}$ and $\mathscr{F}_{38}$, respectively.

The second output of case 15 in \cite{FZ26} belongs to $\mathscr{F}_{36}$, and the first to $\mathscr{F}_{38}$; see \path{remark/catalogue/family_identification_script.md}. Taking the third power of these non-symplectic isometries gives the actions for $\mathscr{F}_{35}$ and $\mathscr{F}_{37}$, respectively.

The two $A_{3,3}$ symplectic families $\mathscr{F}_{52}$ and $\mathscr{F}_{55}$ have generic non-symplectic indices one and two, respectively \cite{KOIKE202512}, \cite{FWZ26}. The families $\mathscr{F}_{56}$ and $\mathscr{F}_{57}$ of non-symplectic index six have the same abstract group and dimension, and their moduli images lie in $\mathcal Z_{55}$. Their abstract pairs $(S,T)$ are isometric, but the multisets of traces below differ.

\begin{prop}\label{prop:a33-assignment}
Let $X_{56}$ and $X_{57}$ be general members of $\mathscr{F}_{56}$ and $\mathscr{F}_{57}$. For either cubic $X$, let $N=\Auts(X)$ and choose $f\in\Aut(X)$ whose image generates $\Aut(X)/N\cong C_6$.
\begin{enumerate}
    \item The multisets of pairs $(\tr(g\mid\Lambda_0),\tr(g^2\mid\Lambda_0))$, for $g\in fN$, are
\[
\begin{aligned}
X_{56}:\quad&\{(-1,7)^3,(2,-2)^6,(5,-11)^6,(5,7)^3\},\\
X_{57}:\quad&\{(2,-2)^{12},(5,-11)^6\},
\end{aligned}
\]
where superscripts indicate multiplicities. These multisets are independent of the choice of $f$ and distinguish the two families.
    \item The first and second saved actions of case 23 in \cite{FZ26} belong to $\mathscr{F}_{56}$ and $\mathscr{F}_{57}$, respectively.
\end{enumerate}
\end{prop}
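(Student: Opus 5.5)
For (1), I would work directly with the linear groups $H_{56}$ and $H_{57}$ of the classification and compute the traces from the defining equations, as in Proposition~\ref{prop:equation-character}. For a general $F$ in $W_i^{\mathrm{sm}}$ the family is saturated, so $G_i=\Aut(X)$ and $G_i^s=N$. Hence the coset $fN$ in $\Aut(X)$ is the image of a coset $\tilde fH_i^s$ in $H_i$, where $\tilde f\in H_i$ has $\det\tilde f$ a primitive sixth root of unity.

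For each $g\in\tilde fH_i^s$, I would compute $\tr(g\mid\Lambda_0\otimes\CC)$ from the eigenvalues $\lambda_1,\ldots,\lambda_6$ of $g$. The recipe is to expand \eqref{eq:jacobian-character} to order $t^6$, take the coefficients of $t^0,t^3,t^6$, and multiply by $\det g$. I would do the same for $g^2$. The scalars $\mu_3I_6$ act trivially, so each element of $fN$ has $|\mu_3|=3$ lifts, all with the same traces. The multiset over $fN$ is then the multiset over the lifts with every multiplicity divided by $3$. This is a finite computation in the character ring over $\mathbb Q(\zeta_{18})$, and it only needs the conjugacy classes of elements in the coset together with their sizes. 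Since $|N|=|A_{3,3}|=18$, the multiplicities should add up to $18$, which matches both displayed lists.

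Independence of the choice of $f$ comes next. Any other choice has the form $f'=f^kn$ with $k\in\{1,5\}$ and $n\in N$, so $f'N$ is either $fN$ or $f^{-1}N$. The traces of $g$ and $g^{-1}$ on the real lattice $\Lambda_0$ agree, and so do those of $g^2$ and $g^{-2}$. So the multiset for $f^{-1}N$ coincides with that for $fN$. The multisets are therefore invariants of the action of $\Aut(X)$ on $\Lambda_0$ up to conjugacy in $O(\Lambda_0)$. Because they differ (only $X_{56}$ has the pair $(-1,7)$), the two families have non-conjugate actions and so are distinguished.

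For (2), I would take the two saved actions of case 23 in \cite{FZ26}, namely explicit matrices for $N$ and for an extra isometry $f$ on $\Lambda_0$. For each, I would compute the same multiset of $(\tr(g),\tr(g^2))$ over $g\in fN$, which is an integer matrix calculation in OSCAR. By \cite{FZ26}, these two actions exhaust the lattice actions with these $(S,T)$ and group data. Both $\mathscr{F}_{56}$ and $\mathscr{F}_{57}$ realize one of them, and they cannot realize the same one by (1). Matching the multisets then assigns the first action to $\mathscr{F}_{56}$ and the second to $\mathscr{F}_{57}$.

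I expect the main obstacle to be ensuring that the lifted coset in $H_i$ is the correct one. That means the chosen equations for $\mathscr{F}_{56}$ and $\mathscr{F}_{57}$ must have $H_i^s/\mu_3$ equal to the full symplectic group $A_{3,3}$, and not a proper subgroup. I would confirm this using saturation \cite[Definition~3.4(1)]{FZ26}. I would also cross-check that each element's trace is an integer and that the $g\mapsto g^{-1}$ symmetry holds, as sanity checks on the eigenvalue bookkeeping.
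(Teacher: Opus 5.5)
Your proposal is correct and follows essentially the same route as the paper. For (1), both compute the traces from the Jacobian-ring character and the residue formula on linear lifts of the elements of $fN$, and both get independence of $f$ from the inversion bijection $fN\leftrightarrow f^{-1}N$. For (2), both match these multisets against the ones computed from the two saved actions of \cite{FZ26}, using the extra isometry times the order-$18$ discriminant kernel. Your explicit appeal to the completeness of that enumeration only spells out what the paper's matching step uses implicitly.
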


\begin{proof}
For (1), in each case $|N|=18$ and $|\Aut(X)|=108$. Applying \eqref{eq:jacobian-character} and the residue formula to linear representatives of the elements $g\in fN$ gives the displayed multisets. The quotient $C_6$ has only two generators, so another choice gives $fN$ or $f^{-1}N$. Inversion is a bijection between these cosets. An orthogonal isometry and its inverse have the same trace, as do their squares, so both multisets are independent of this choice.

For (2), the discriminant kernel on $S$ for each saved action has order $18$. Adjoining the extra isometry gives a group of order $108$. For every product of the extra isometry with an element of the discriminant kernel, we compute both traces using $\Lambda_0\otimes\mathbb Q=(S\otimes\mathbb Q)\oplus(T\otimes\mathbb Q)$. The first output gives the multiset for $X_{56}$ and the second gives that for $X_{57}$, proving the assignment. See \path{remark/catalogue/a33_case23/result.md} and \path{remark/catalogue/a33_case23/case23_full_coset_comparison.mrdi} for the calculation and its saved result.
\end{proof}

\section{Action-maximal families}\label{sec:maximal}

An action is \emph{action-maximal} if its associated linear group $H$ is not conjugate to a proper subgroup of another group in the family list. There are 21 such actions, belonging to $\mathscr{F}_i$ for $i\in\{1,\ldots,8,13,14,18,20,24,41,44,96,127,152,154,155,156\}$. Every linear group in the list is conjugate to a subgroup of one of them \cite[Proposition~6.9 and Table~2]{FWZclass}. The first fifteen have $\rk S\geq15$; the other six have $\rk S<15$.

\subsection{Families with $\rk S\geq15$}\label{sec:known}

The families $\mathscr{F}_1,\ldots,\mathscr{F}_8$ have $\rk S=20$ and $\rk T=2$. Their lattice pairs and families are given by Laza--Zheng \cite[Theorem~1.8]{laza2022automorphisms} and Koike \cite{KOIKE202512,KOIKE2026}. For each pair we use its primitive embedding and a compatible extra isometry in $\widetilde O(\Lambda_0)$, and apply the primitive-extension and root tests of \cite[Algorithm~2]{FZ26}.

For $15\leq\rk S\leq19$, we use the actions computed in \cite{FZ26}. The two $M_9$ outputs represent one family, while the index-three output for $F_{21}$ is not saturated. Each classified symplectic lattice pair gives one class of embedded lattice data, except for $M_{10}$ and $S_{3,3}$, which give two \cite[Proposition~4.2]{FZ26}. We use the assignments of \S\ref{sec:two-pairs} for these exceptions.

For a symplectic family of generic non-symplectic index one, the extra isometry is the identity. For a symplectic family of positive dimension and generic non-symplectic index two, its restriction to $T$ is $-\mathrm{id}_T$ \cite{FWZ26}. We extend this action to an isometry in $\widetilde O(\Lambda_0)$, using \cite[Lemma~5.1]{FZ26} if a fitting extension does not lie in this group. This lemma assumes $\rk S<20$; for rank twenty we check the gluing directly.

This gives the actions for $\mathscr{F}_9$, $\mathscr{F}_{11}$, $\mathscr{F}_{16}$, and $\mathscr{F}_{19}$, whose $T$ has rank three. Other actions come from \cite{FZ26} or the restrictions of \S\ref{sec:restriction}.

For the $A_6$ family with $\rk S=19$ and generic non-symplectic index two and the $T_{48}$ family, we use the lattice pairs attached to their specified components in \cite{laza2022automorphisms} and \cite{KOIKE202512,KOIKE2026}. Their lattices $T$ have rank three, signature $(1,2)$, and discriminant symbols $4_5^{-1}3^{+1}5^{+1}$ and $2_7^{+1}8_{\mathrm{II}}^{-2}3^{+2}$, respectively.

The primitive embeddings and compatible extra isometries, with their group and root checks, are recorded in \path{remark/catalogue/ambient_completion/rank20_19/}. The file \path{provenance_and_status.md} in that directory lists the individual constructions.

\subsection{Families with $\rk S<15$}

There are six action-maximal families with $\rk(S)<15$.

Write $\omega=\zeta_3$. For each row, the listed matrices together with $\omega I_6$ generate $H_i$. The class $[g_i]\in G_i$ maps to a generator of $G_i/G_i^s$, and generates $G_i$ in the four cyclic cases.
\begin{center}
\small
\begin{tabular}{c l}
$i$ & generators other than $\omega I_6$\\
\hline
96 & $g_{96}=\diag(\zeta_8,-\zeta_4,-1,1,\omega,1)$,
       $b_{96}=(x_5\;x_6)$\\
127 & $g_{127}=\diag(\zeta_4,-1,1,1,1,-1)$,
       $s_{127}=\diag(1,1,1,1,-1,-1)$\\
152 & $g_{152}=\diag(\zeta_{16},-\zeta_8^3,\zeta_4,-\zeta_4,-1,1)$\\
154 & $g_{154}=\diag(-\zeta_8,-\zeta_4,\zeta_4,-1,1,\omega^2)$\\
155 & $g_{155}=\diag(\zeta_{32},-\zeta_{16}^7,\zeta_8,-\zeta_4,-1,1)$\\
156 & $g_{156}=\diag(\zeta_{16},-\zeta_8^3,\zeta_4,-1,1,\omega)$
\end{tabular}
\end{center}
Here $b_{96}$ interchanges $x_5$ and $x_6$. The spaces of invariant cubic forms are
{\small
\begin{align*}
W_{96}={}&\operatorname{span}\{x_1^2x_2,x_2^2x_3,x_3^2x_4,x_4^3,x_5^3+x_6^3\},\\
W_{127}={}&\operatorname{span}\{x_1^2x_2,x_2^2x_3,x_2^2x_4,x_2x_5x_6,x_3^3,x_3^2x_4,x_3x_4^2,x_4^3,x_3x_5^2,x_4x_5^2,x_3x_6^2,x_4x_6^2\},\\
W_{152}={}&\operatorname{span}\{x_1^2x_2,x_2^2x_3,x_3^2x_5,x_3x_4x_6,x_4^2x_5,x_5^2x_6,x_6^3\},\\
W_{154}={}&\operatorname{span}\{x_1^2x_2,x_2^2x_4,x_2x_3x_5,x_3^2x_4,x_4^2x_5,x_5^3,x_6^3\},\\
W_{155}={}&\operatorname{span}\{x_1^2x_2,x_2^2x_3,x_3^2x_4,x_4^2x_5,x_5^2x_6,x_6^3\},\\
W_{156}={}&\operatorname{span}\{x_1^2x_2,x_2^2x_3,x_3^2x_4,x_4^2x_5,x_5^3,x_6^3\}.
\end{align*}
}
The dimensions of these families are $0,4,1,1,0,0$, respectively.

Applying Proposition~\ref{prop:equation-character} to these matrices gives the characteristic polynomials in the table below. In the four cyclic cases the symplectic subgroup is trivial, so $T=\Lambda_0$. See \path{remark/low_rank/maximal_cases/maximal_family_character_result.md} for the calculation.

\begin{center}
\small
\begin{tabular}{c c c l}
$i$ & $G$ & $\rk T$ & $\det(t-g_i|_T)$\\
\hline
96 & $S_3\times C_{24}$ & 8 & $\Phi_{24}$\\
127 & $C_2\times C_4$ & 14 & $\Phi_1\Phi_2^3\Phi_4^5$\\
152 & $C_{16}$ & 22 & $\Phi_1^2\Phi_2^2\Phi_4\Phi_{16}^2$\\
154 & $C_{24}$ & 22 & $\Phi_3\Phi_{12}\Phi_{24}^2$\\
155 & $C_{32}$ & 22 & $\Phi_1\Phi_2\Phi_8\Phi_{32}$\\
156 & $C_{48}$ & 22 & $\Phi_3\Phi_{12}\Phi_{48}$
\end{tabular}
\end{center}
Since a finite-order action over $\mathbb Q$ is semisimple, its minimal polynomial is the product of the distinct cyclotomic factors in its characteristic polynomial. The multiplicities in the characteristic polynomial determine the ranks of the cyclotomic sublattices. By the Hodge--Riemann relations, the real subspace corresponding to $\chi(g_i),\chi(g_i)^{-1}$ has negative index two, and the other subspaces are positive definite.

\subsection{The $S_3\times C_{24}$ and cyclic actions}
For $S_3\times C_{24}$ we require an equivariant primitive extension, whereas in the cyclic cases $S=0$ and $T=\Lambda_0$. Algorithm~\ref{alg:five-searches} adapts \cite[Algorithms~2 and~3]{FZ26}, using the characteristic polynomials and signatures determined by the matrices above.

\begin{algorithm}[htbp]
\caption{Integral actions for $\mathscr{F}_{96}$, $\mathscr{F}_{152}$, $\mathscr{F}_{154}$, $\mathscr{F}_{155}$, and $\mathscr{F}_{156}$}\label{alg:five-searches}
\begin{algorithmic}[1]
\Require $(S,T)$, the order $m$ and exact characteristic and minimal polynomials of $f_T$, the signatures of its cyclotomic sublattices, and the family dimension; for a cyclic case, $S=0$ and $T=\Lambda_0$
\State Enumerate $f_T\in O(T)$ with the given order, polynomials, and signatures up to conjugacy; for $S_3\times C_{24}$, also take representatives under powers coprime to $24$
\ForAll{such $f_T$}
    \If{$S=0$}
        \State Let $\mathcal E=\{(\Lambda_0,f_T)\}$
    \Else
        \State Let $\mathcal E$ be the equivariant primitive extensions of $S\oplus T$ into $\Lambda_0$, with one fitting isometry $f$ for each class
    \EndIf
    \ForAll{$(L,f)\in\mathcal E$}
        \State Let $P=\ker\Phi_m(f_T)$ and $K=P^\perp_L$
        \If{$K$ contains a short or long root or the period dimension differs from the family dimension}
            \State \textbf{continue}
        \EndIf
        \If{$S=0$}
            \If{$f\notin\widetilde O(\Lambda_0)$ or $|\widetilde O(K)|\ne1$} \State \textbf{continue} \EndIf
        \Else
            \If{$|\widetilde O(K)|\ne|\widetilde O(S)|$} \State \textbf{continue} \EndIf
            \State Find a fitting isometry $f\in\widetilde O(L)$ in the extension class; if none exists, \textbf{continue}
            \State Check the group generated with $\widetilde O(S)$
        \EndIf
        \State Record $(L,f,S,T,P,K)$ if all tests pass
    \EndFor
\EndFor
\end{algorithmic}
\end{algorithm}

\begin{lem}\label{lem:cyclic-powers}
In each of the four cyclic cases, the classes recorded by Algorithm~\ref{alg:five-searches} form one orbit under powers coprime to the order. Thus they give one cyclic subgroup of $O(\Lambda_0)$ up to conjugacy.
\end{lem}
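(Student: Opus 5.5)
The plan is to analyse the recorded output of Algorithm~\ref{alg:five-searches} separately for $\mathscr{F}_{152},\mathscr{F}_{154},\mathscr{F}_{155},\mathscr{F}_{156}$, of orders $m=16,24,32,48$. In each case $S=0$ and $T=\Lambda_0$, and the algorithm returns a finite list of conjugacy classes of pairs $(\Lambda_0,f)$. The group $(\ZZ/m)^\times$ acts on this list by $f\mapsto f^k$. The first observation is that this action preserves every filter in the algorithm. The characteristic polynomial of $f^k$ is a product of the same cyclotomic factors $\Phi_d$, since $k$ is coprime to every divisor $d$ of $m$, so the ranks of the cyclotomic sublattices are unchanged. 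The lattice $P=\ker\Phi_m(f)$ is unchanged because $\Phi_m(f^k)$ and $\Phi_m(f)$ have the same kernel. The conditions on $K$ (no short or long roots, $|\widetilde O(K)|=1$) and the condition $f\in\widetilde O(\Lambda_0)$ are also preserved.

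The signature condition needs more care. The negative-definite part sits in the real subspace for $\chi(g_i)^{\pm1}$, and $f^k$ moves it to the subspace for $\chi(g_i)^{\pm k}$. So I would replace $f^k$ by the class whose $\zeta_m$-eigenspace carries the period. Equivalently, I would regard $f$ and $f^k$ as generating the same cyclic subgroup, which is the statement we want. Every recorded class therefore lies in a $(\ZZ/m)^\times$-orbit of recorded classes after normalising which eigenvalue is $\chi$. The substance of the lemma is that there is only one orbit.

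To show this, I would take the saved output of the algorithm and, for each case, pick one representative $f_0$. For each recorded $f$, I would exhibit an explicit $k$ and an isometry $\gamma\in O(\Lambda_0)$ with $\gamma f\gamma^{-1}=f_0^k$. Conjugacy of isometries of the indefinite lattice $\Lambda_0$ is decided by the OSCAR isometry-class machinery already used in \cite[Algorithms~2 and~3]{FZ26}. This machinery glues the cyclotomic sublattices via the isomorphism classes of the hermitian lattices over $\ZZ[\zeta_d]$ and the gluing data, so comparing $f$ with each $f_0^k$ is a finite check. As a consistency check, I would verify that the number of recorded classes equals $\varphi(m)$ divided by the order of the stabiliser of one class in $(\ZZ/m)^\times$.

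The final step is to pass from one orbit to one subgroup. Classes in one orbit under coprime powers generate the same cyclic subgroup $\langle f_0\rangle$ up to conjugacy in $O(\Lambda_0)$. This gives the second sentence of the lemma.

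I expect the main obstacle to be the conjugacy tests in $O(\Lambda_0)$ themselves. They are especially hard for $\mathscr{F}_{152}$ and $\mathscr{F}_{154}$, where $\Phi_{16}$ and $\Phi_{24}$ occur with multiplicity two. There the hermitian lattices have rank two over the cyclotomic ring, and the gluing groups between the $\Phi_1,\Phi_2,\Phi_4$ (respectively $\Phi_3,\Phi_{12}$) pieces are larger. My first attempt would be to compare the invariants of the hermitian genus and the induced actions on the discriminant groups of each $\ker\Phi_d(f)$. If these invariants already separate the classes into a single $(\ZZ/m)^\times$-orbit, only a few explicit conjugating isometries would remain to be computed.
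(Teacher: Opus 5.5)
Your plan would prove the lemma if the conjugacy tests in $O(\Lambda_0)$ could actually be carried out. However, it takes a much heavier route than the paper, whose proof needs no conjugacy test at all, and you already have both ingredients.

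First, you show that every coprime power $g^k$ of a recorded isometry $g$ passes all the tests of Algorithm~\ref{alg:five-searches}. Since the enumeration is complete up to conjugacy, $g^k$ is then automatically conjugate to \emph{some} recorded class, so no explicit $\gamma$ has to be exhibited.

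Second, the phenomenon you treat as a nuisance in your signature discussion is exactly the invariant that finishes the proof. Among the real subspaces of $P\otimes\mathbb R$ attached to primitive eigenvalue pairs, exactly one has negative index two; call its pair $\{\zeta_m^{a},\zeta_m^{-a}\}$. This pair is preserved under conjugation by isometries, and for $g^k$ it becomes $\{\zeta_m^{ak},\zeta_m^{-ak}\}$. As $k$ runs over $(\ZZ/m)^\times$ these give $\varphi(m)/2$ distinct pairs, so the powers of $g$ meet at least $\varphi(m)/2$ distinct recorded classes. The computation records exactly $\varphi(m)/2$ classes, so the powers of one recorded isometry exhaust them.

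So the count you relegate to a consistency check is the decisive input. There is also no need to ``normalise which eigenvalue is $\chi$'': the algorithm does not fix that pair, and its variation is precisely what distinguishes the recorded classes.

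What your approach would buy is explicit conjugating isometries, a certificate that does not rely on completeness of the enumeration. The cost is deciding conjugacy for isometries of an indefinite rank-$22$ lattice, including hermitian pieces of rank two over $\ZZ[\zeta_{16}]$ and $\ZZ[\zeta_{24}]$, which you correctly flag as the bottleneck.

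Be careful with your last step as stated. Invariants such as hermitian genera or induced discriminant actions can only prove that two classes are \emph{not} conjugate. They cannot by themselves place the recorded classes in a single orbit. They suffice only when combined with the completeness-and-counting argument above, and that combination is exactly the paper's proof.
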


\begin{proof}
Let $m$ be the order of the cyclic group. The calculation records $\varphi(m)/2$ classes for each $m\in\{16,24,32,48\}$; see \path{remark/low_rank/maximal_cases/maximal_lattice_actions.md}. For any recorded isometry $g$, the lattice $P=\ker\Phi_m(g)$ has signature $(14,2)$ and positive definite orthogonal complement. Among the subspaces of $P\otimes\mathbb R$ corresponding to primitive eigenvalue pairs, exactly one has negative index two. Write its pair as $\{\zeta_m^a,\zeta_m^{-a}\}$.

For $(k,m)=1$, the power $g^k$ generates the same cyclic group. It has the same characteristic polynomial and cyclotomic sublattices as $g$, so it also passes the tests of Algorithm~\ref{alg:five-searches}. The corresponding pair of eigenvalues of $g^k$ is $\{\zeta_m^{ak},\zeta_m^{-ak}\}$. These give $\varphi(m)/2$ distinct pairs, which are invariant under integral conjugacy. The powers of any one recorded isometry therefore account for all the recorded classes.
\end{proof}

\begin{prop}\label{prop:five-searches}
Algorithm~\ref{alg:five-searches} determines the integral group actions of $\mathscr{F}_{96}$, $\mathscr{F}_{152}$, $\mathscr{F}_{154}$, $\mathscr{F}_{155}$, and $\mathscr{F}_{156}$ up to conjugacy in $O(\Lambda_0)$. In each case its outputs give one group action after forgetting the choice of generator of the non-symplectic quotient.
\end{prop}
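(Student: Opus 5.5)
The plan is to show two things: that the search in Algorithm~\ref{alg:five-searches} is complete, and that its output is unique up to the ambiguities named in the statement. \emph{Completeness.} Let $X$ be a very general member of one of the five families, with group $G_i$. By the global Torelli theorem, $G_i$ acts faithfully on $\Lambda_0$ through $\widetilde O(\Lambda_0)\cap O^+(\Lambda_0)$. Take the extra isometry $f$ induced by $g_i$. By Proposition~\ref{prop:equation-character}, $f_T$ has the characteristic polynomial listed in the table, and minimal polynomial equal to the product of its distinct cyclotomic factors. By the Hodge--Riemann relations, the cyclotomic sublattices have the stated signatures. Hence some conjugate of $f_T$ appears among the enumerated representatives.

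For $\mathscr{F}_{96}$ we also need the pair $(S,T)$ and the embedding $S\oplus T\hookrightarrow\Lambda_0$ to be equivariant. This is supplied by the equivariant primitive extension step, using Nikulin's gluing description. The actual $(\Lambda_0,f)$ satisfies every filter in the algorithm:
\begin{itemize}
\item $K$ has no short or long roots, because the generic period avoids $\calH_2\cup\calH_6$ and $K$ is the generic primitive algebraic lattice (\cite[Proposition~4.7]{FZ26}).
\item The period dimension equals $\dim\calF_i$.
\item For a very general member, $\Aut(X)$ is exactly $G_i$. Any further element of $\widetilde O(K)$, extended by the identity on $P$, would give an extra Hodge isometry. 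This forces $|\widetilde O(K)|=1$ in the cyclic cases and $|\widetilde O(K)|=|\widetilde O(S)|$ for $\mathscr{F}_{96}$.
\end{itemize}
So the true action is among the recorded outputs.

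\emph{Realization and uniqueness.} Conversely, take any recorded output. It passes the root test and the signature condition on the $\zeta_m$-eigenspace, so \cite[Proposition~4.4]{FZ26} realizes it by a smooth cubic whose automorphism group contains a group with the same cyclotomic data. Its period space has the family dimension. It remains to show that all recorded outputs are the same group action.
\begin{itemize}
\item \textbf{Cyclic cases.} Lemma~\ref{lem:cyclic-powers} shows that the recorded classes form one orbit under coprime powers, hence give one cyclic subgroup.
\item \textbf{$\mathscr{F}_{96}$.} Step~1 already takes representatives modulo powers coprime to $24$. I expect the saved computation to produce exactly one class. If it produces several, I would run the argument of Lemma~\ref{lem:cyclic-powers}: the unique negative-definite eigenvalue pair $\{\zeta_{24}^a,\zeta_{24}^{-a}\}$ of $P=\ker\Phi_{24}$, with $\rk P=8$ and $\varphi(24)/2=4$ pairs, is a conjugacy invariant that is permuted transitively by coprime powers. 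The group generated with $\widetilde O(S)\cong S_3$ is then determined by the class of $f$.
\end{itemize}

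\emph{Main obstacle.} The hardest step is justifying that the genericity filters $|\widetilde O(K)|$ neither discard the true action nor admit extraneous ones. In particular, for $\mathscr{F}_{96}$ one must check that the generated group is exactly $S_3\times C_{24}$ rather than a larger group. I would settle this by computing the group generated by $f$ and $\widetilde O(S)$ and comparing it with the saturation of the family in \cite{FWZclass}, using action-maximality to rule out enlargements. A second, more computational difficulty is making the enumeration of $f_T$ up to conjugacy in $O(T)$ exhaustive for the rank-$22$ cyclic cases. For this I would rely on the OSCAR hermitian-lattice enumeration over $\mathbb Z[\zeta_m]$, as in \cite[Algorithm~3]{FZ26}.
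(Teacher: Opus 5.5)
Your structure matches the paper's: the geometric action passes every filter, Lemma~\ref{lem:cyclic-powers} settles the cyclic cases, and $\mathscr{F}_{96}$ rests on the computation. For the cyclic cases the paper adds that $\{\chi(g_i),\chi(g_i)^{-1}\}$ singles out the class of $g_i$ up to inversion.

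\textbf{The gap in $\mathscr{F}_{96}$.} You pass from ``the true $(\Lambda_0,f)$ passes the filters'' to ``the true action is among the recorded outputs'', but this does not follow. Algorithm~\ref{alg:five-searches} keeps one representative of $f_T$ up to conjugacy \emph{and coprime powers}. It also keeps only \emph{one} fitting isometry in $\widetilde O(\Lambda_0)$ per extension class. So the recorded $f$ need not be the geometric one, and you need two facts.
\begin{enumerate}
\item $\langle\widetilde O(S),f^k\rangle=\langle\widetilde O(S),f\rangle$ for $(k,24)=1$. This holds because $f^{24}$ is trivial on $T$, so it lies in $\widetilde O(S)$ by the next fact.
\item Two fitting isometries $f,f'\in\widetilde O(\Lambda_0)$ with the same embedded $S,T$ and the same $f_T$ differ by an element of $\widetilde O(S)$. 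Indeed, $f'f^{-1}$ is trivial on $T$, hence on the glue subgroup $H_S\subset A_S$. It is also trivial on $A_{\Lambda_0}$, into which $H_S^\perp$ embeds equivariantly. Here $|A_T|=324$ equals the order of the glue, so $H_S$ carries the nondegenerate form $-q_T$ and $A_S=H_S\oplus H_S^\perp$. Thus $f'f^{-1}|_S\in\widetilde O(S)$, and $f,f'$ generate the same group with the symplectic subgroup.
\end{enumerate}
Your sentence ``the group generated with $\widetilde O(S)$ is then determined by the class of $f$'' is exactly what the second fact proves; as written it is asserted, not shown. The paper also invokes \cite[Lemma~5.1]{FZ26}, valid because the $S_3$ symplectic family is connected of generic index two, to pass to a fitting isometry in $\widetilde O(\Lambda_0)$.

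\textbf{The fallback fails.} Your fallback for several $\mathscr{F}_{96}$ outputs does not work. The eigenvalue-pair invariant of Lemma~\ref{lem:cyclic-powers} only shows that the coprime powers of one class are pairwise non-conjugate. It never shows that two given classes are related by a coprime power; that lemma concludes only because the computation returns exactly $\varphi(m)/2$ classes. In $\mathscr{F}_{96}$, Step~1 has already identified coprime powers on $T$. Two extension classes over the same $f_T$ carry the same eigenvalue pair, so the invariant cannot identify them.

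Uniqueness for $\mathscr{F}_{96}$ therefore rests on the computation returning exactly one extension class with a fitting isometry in $\widetilde O(\Lambda_0)$ and the required group, together with the two facts above. That is the paper's argument, and you should state it that way rather than as an expectation with a fallback. Your realization step via \cite[Proposition~4.4]{FZ26} is harmless but not needed for the statement.
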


\begin{proof}
Proposition~\ref{prop:equation-character} and the Hodge--Riemann relations give the characteristic polynomials and signatures of the geometric actions. These actions satisfy the tests of Algorithm~\ref{alg:five-searches}. The enumeration considers all such isometries and compatible primitive extensions, with isometries on $T$ taken up to coprime powers in the $S_3\times C_{24}$ case.

For $S_3\times C_{24}$, we have $\langle\widetilde O(S),f^k\rangle=\langle\widetilde O(S),f\rangle$ whenever $(k,24)=1$. The calculation gives one extension class with a fitting isometry in $\widetilde O(\Lambda_0)$ and the required group; see \path{remark/low_rank/maximal_cases/maximal_lattice_actions.md}. Here \cite[Lemma~5.1]{FZ26} applies because the symplectic family with group $S_3$ is connected and has generic non-symplectic index two. For fixed embedded lattices and action on $T$, two fitting isometries in $\widetilde O(\Lambda_0)$ differ by an element of $\widetilde O(S)$, so they generate the same group with the symplectic subgroup.

For the cyclic cases, Lemma~\ref{lem:cyclic-powers} gives one group action. The pair $\{\chi(g_i),\chi(g_i)^{-1}\}$ determines the conjugacy class of the recorded generator up to inversion.
\end{proof}

\subsection{The $C_2\times C_4$ family}
For $\mathscr{F}_{127}$, recall the matrices
\begin{align*}
g_{127}&=\diag(i,-1,1,1,1,-1),\\
s_{127}&=\diag(1,1,1,1,-1,-1).
\end{align*}
The involution $s_{127}$ generates the symplectic subgroup, and $g_{127}$ maps to a generator of the non-symplectic quotient $C_4$, with $\chi(g_{127})=i$. We construct commuting isometries $g,s\in\widetilde O(\Lambda_0)$ of orders four and two, with $g^2\ne s$, and take periods in the $i$-eigenspace of $g$.

The involution $g_{127}^2s_{127}=\diag(-1,1,1,1,-1,-1)$ has Marquand's $\phi_3$ type \cite[Proposition~2.12(3)]{MarquandInvolution}. Its anti-invariant lattice is $Q\cong U\oplus\langle2\rangle\oplus\langle-2\rangle\oplus E_8(2)$ \cite[Lemma~5.13]{MarquandInvolution}, which we use in the construction.

\begin{prop}\label{prop:c2c4}
There exist a lattice $L\cong\Lambda_0$ and commuting isometries $g,s\in\widetilde O(L)$ of orders four and two, respectively, with $g^2\ne s$, such that $S=L^{-s}\cong E_8(2)$, $T=L^s$, and
\[
\det(t-g|_T)=\Phi_1\Phi_2^3\Phi_4^5.
\]
The lattice $P=\ker\Phi_4(g|_T)$ has signature $(8,2)$, and $K_{\mathrm{per}}=P^\perp_L$ is positive definite and contains neither short nor long roots. Periods in the $i$-eigenspace of $g$ give a family of dimension four whose very general member has automorphism group $\langle g,s\rangle\cong C_4\times C_2$. This is the lattice action of $\mathscr{F}_{127}$.
\end{prop}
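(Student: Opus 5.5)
Choose the decomposition dictated by Marquand's $\phi_3$ involution $u=g_{127}^2s_{127}$: its anti-invariant lattice is $Q\cong U\oplus\langle2\rangle\oplus\langle-2\rangle\oplus E_8(2)$ and its invariant lattice is $Q^\perp_{\Lambda_0}$. Since $\diag(-1,1,1,1,-1,-1)$ and $s_{127}$ differ by $g_{127}^2=\diag(-1,1,1,1,1,1)$, the sign pattern suggests the splitting
\[
Q=E_8(2)\oplus Q',\qquad Q'=U\oplus\langle2\rangle\oplus\langle-2\rangle,
\]
with $s=-1$ on the $E_8(2)$ summand and $s=+1$ on $Q'$. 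We then define $g$ on $Q\oplus Q^\perp$ as follows. On $Q'$, take $g^2=-1$, realized as an order-four isometry of $U\oplus\langle2\rangle\oplus\langle-2\rangle$; this uses the complex structure from $\langle2\rangle\oplus\langle-2\rangle$, or more generally any fixed-point-free order-four isometry, which has $\Phi_4$ multiplicity two. On $E_8(2)$, take $g=\pm1$. On $Q^\perp$, take $g$ to be an involution or the identity, arranged so that the cyclotomic multiplicities on $T=L^s$ are $\Phi_1^1\Phi_2^3\Phi_4^5$. Since $\rk T=14$ and $\Phi_4$ accounts for $10$ dimensions, $g^2=-1$ must hold on a rank-$10$ piece $P\supset Q'$. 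This forces part of $Q^\perp$ to carry $g^2=-1$, which contradicts $u=g^2s$ being $+1$ there. I will therefore rather use $u=g^2s$ exactly: $g^2=-1$ on $Q$ where $s=1$, and $g^2=1$ where $s=-1$. Concretely, we build $L$ as an overlattice
\[
L\supset P\oplus K_{\mathrm{per}}\oplus S,
\]
with $P$ of signature $(8,2)$ and rank $10$ carrying $g^2=-1$ (a $\ZZ[i]$-lattice), $S=E_8(2)$, and the $\Phi_1\Phi_2^3$ part of rank four inside $T$. We choose these blocks from $Q$ and $Q^\perp$ and verify the gluing with Nikulin's criteria.

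The key steps proceed in order. First, write explicit Gram matrices for $S=E_8(2)$, a Hermitian $\ZZ[i]$-lattice $P$ of signature $(8,2)$, and a definite rank-four lattice $R$ with an involution of type $\Phi_1\Phi_2^3$. Second, find gluings $\Lambda_0\cong\operatorname{Sat}(S\oplus P\oplus R)$ whose glue groups are preserved by $g,s$ and on which $g,s$ act compatibly, with the residual subgroup $A_{\Lambda_0}$ fixed so that $g,s\in\widetilde O(L)$. This is done in OSCAR by equivariant primitive extensions, as in Algorithm~\ref{alg:five-searches}. Third, check by computing traces that $\det(t-g|_T)=\Phi_1\Phi_2^3\Phi_4^5$, that $g^2\neq s$, and that the pair matches Proposition~\ref{prop:equation-character}. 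Fourth, check that $K_{\mathrm{per}}=R\oplus S$ saturated is positive definite without short or long roots, using a short-vector enumeration of norms $2$ and $6$ with divisibility $3$. Proposition~4.4 of \cite{FZ26} then realizes the action by automorphisms on the $i$-eigenspace ball, which has dimension $\rk P/2-1=4$, matching $\dim W_{127}$ modulo normalizer.

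For genericity, Proposition~4.7 of \cite{FZ26} identifies $P$ and $K_{\mathrm{per}}$ as the transcendental and primitive algebraic lattices. The automorphism group is then the centralizer of the period in $\widetilde O(\Lambda_0)$ acting on $K_{\mathrm{per}}$. We compute that the group of isometries of $K_{\mathrm{per}}$ extending by $\pm i$-scalars on $P$ is exactly $\langle g,s\rangle$, which can be checked via $|\widetilde O(K)|$ and the induced action on $A_K$. Finally, the identification with $\mathscr{F}_{127}$ follows because the family list contains a unique family with group $C_2\times C_4$ of dimension four and this characteristic polynomial.

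I expect the main obstacle to be the second step: finding an equivariant gluing in which $g$ genuinely lies in $\widetilde O(\Lambda_0)$ rather than merely $O(\Lambda_0)$, and in which $s$ has anti-invariant lattice exactly $E_8(2)$. The fallback is to apply \cite[Lemma~5.1]{FZ26}, twisting by an element of $\widetilde O(S)$ to correct the discriminant action. If that fails, the next resort is a direct OSCAR search over $O(T)$ classes with the prescribed polynomial.
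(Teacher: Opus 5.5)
Your outline has the right overall shape: an equivariant gluing built around a Hermitian $\ZZ[i]$-lattice $P$ of signature $(8,2)$, then \cite[Propositions~4.4 and~4.7]{FZ26}, a ball of dimension $4$, and uniqueness of $\mathscr F_{127}$ in the table. The construction step, however, has a genuine gap. You never determine how $g$ acts on $S$, and the only explicit choice you make, $g=\pm1$ on $E_8(2)$, cannot work.

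With $g|_S=\pm1$, the involution $g^2$ is $-1$ exactly on $P$. It would then be a non-symplectic involution ($\chi(g^2)=-1$) with anti-invariant lattice of rank $10$, and $u=g^2s$ would have anti-invariant rank $18$. Marquand's classification allows only rank $16$ (type $\phi_1$) or $12$ (type $\phi_3$). Since \cite[Proposition~4.4]{FZ26} makes $g$ geometric once the $\widetilde O(\Lambda_0)$ and root tests pass, every search with this input must fail.

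Your own relation $u=g^2s$ already gives the correct bookkeeping, but you stop short of it. $Q$ has rank $12$ and $Q\cap T=\ker(g^2+1)|_T=P$ has rank $10$, so $S\cap Q$ has rank $2$ and carries $g^2=1$. On the rank-$6$ lattice $S\cap Q^\perp$ one has $u=1$ and $s=-1$, hence $g^2=-1$. So $g^2=-1$ on part of $Q^\perp$ is not a contradiction; it is forced. Consequently $S$ is a block of neither $Q$ nor $Q^\perp$, and $g|_S$ has order four with $\Phi_4^3$ dividing its characteristic polynomial. (Your display $L\supset P\oplus K_{\mathrm{per}}\oplus S$ should also read $P\oplus R\oplus S$, since $S\subset K_{\mathrm{per}}$.)

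The missing idea is the second involution $g^2=g_{127}^2=\diag(-1,1,1,1,1,1)$. It is Marquand's $\phi_1$, with $\Lambda_0^{g^2}\cong E_6(2)$ and $\Lambda_0^{-g^2}\cong U^{\oplus2}\oplus D_4^{\oplus3}$. The paper uses both involutions to fix every genus in advance and then glues in three controlled steps:
\begin{enumerate}
\item $P$ (called $K_{\mathrm{aux}}$ there) is the orthogonal complement in $Q$ of $A_0=S\cap Q\cong\langle4\rangle^{\oplus2}$. It has determinant $2^{10}$, and $A_0\oplus P\subset Q$ has index $4$.
\item $D_6(2)$, the part of $S$ where $g^2=-1$, is glued to $P$ with index $64$ to give $U^{\oplus2}\oplus D_4^{\oplus3}$.
\item $E_6(2)$, which contains $A_0\oplus R$ with finite index, is glued on with index $64$.
\end{enumerate}
In your single gluing of $S\oplus P\oplus R$, none of the following is specified: the genus of $P$ among Hermitian lattices of signature $(8,2)$, the rank-four lattice $R$, or the isometry of $S$. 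So your Step~2 has no well-defined input.

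Two smaller points. First, the fallback \cite[Lemma~5.1]{FZ26} is used in this paper only when the symplectic family has generic non-symplectic index two, which fails for the $C_2$ family $\mathscr F_{119}$. Second, for the generic automorphism group, checking isometries that act on $P$ by $\pm i$ is not enough, since a larger cyclic group could a priori act by other roots of unity. The paper computes $|\widetilde O(K_{\mathrm{per}})|=2$ for the symplectic part, then uses $4\mid n$ and $\varphi(n)\mid\rk P=10$ to force $n=4$.
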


\begin{proof}
We choose a primitive $A_0\cong\langle4\rangle^{\oplus2}$ in $Q$ whose orthogonal complement $K_{\mathrm{aux}}$ has signature $(8,2)$ and determinant $2^{10}$. We realize $K_{\mathrm{aux}}$ as the trace lattice of a Hermitian lattice over $\mathbb Z[i]$, on which multiplication by $i$ gives an isometry of order four.

Using anti-isometries of discriminant subgroups as in \cite[Proposition~1.5.1]{Nikulin_1980}, equivariant gluing of $A_0\oplus K_{\mathrm{aux}}$ gives $Q$ with index $4$. Gluing $D_6(2)\oplus K_{\mathrm{aux}}$ with index $64$ gives $L'\cong U^{\oplus2}\oplus D_4^{\oplus3}$. Gluing $E_6(2)\oplus L'$ with index $64$ gives $L\cong\Lambda_0$ and commuting isometries $g,s$ with the stated properties. The Gram matrices, gluings, and checks of the actions and roots are recorded in \path{remark/low_rank/maximal_cases/family_127_g13_full_l_postcheck.md}.

Since $g,s\in\widetilde O(L)$ and $K_{\mathrm{per}}$ contains neither short nor long roots, \cite[Proposition~4.4]{FZ26} gives periods of smooth cubic fourfolds. The $i$-eigenspace of $g$ in $P_{\mathbb C}$ has dimension $5$, so these periods form an open subset of a complex ball of dimension four. The polarized Torelli theorem realizes $g$ and $s$ as automorphisms \cite[Proposition~2.4]{laza2022automorphisms}.

For a very general period, $P$ is the transcendental lattice and the symplectic group is $\widetilde O(K_{\mathrm{per}})$ \cite[Proposition~4.7(1)]{FZ26}. The orders of $O(K_{\mathrm{per}})$ and its image in $O(q_{K_{\mathrm{per}}})$ give
\[
|\widetilde O(K_{\mathrm{per}})|=\frac{8{,}847{,}360}{4{,}423{,}680}=2,
\]
so the symplectic group is generated by $s$. See \path{remark/low_rank/maximal_cases/family_127_g13_saturation_preflight.out}.

Let $n=[\Aut(X):\Auts(X)]$ for a very general member. Since $\chi(g)=i$, we have $4\mid n$. By \cite[Corollary~6.2]{laza2022automorphisms}, $\varphi(n)\mid\rk P=10$. Every multiple $n>4$ of four has $4\mid\varphi(n)$, so $n=4$ and $|\Aut(X)|=8$. As $g,s$ commute and $g^2\ne s$,
\[
\Aut(X)=\langle g,s\rangle\cong C_4\times C_2.
\]

In \cite[Table~1]{FWZclass}, $\mathscr{F}_{127}$ is the only family of dimension four with this group and $\rk S=8$. The action on cohomology is locally constant in this family, so the construction gives its lattice action.
\end{proof}

\section{Restrictions of lattice actions}\label{sec:restriction}

\subsection{Restriction along linear inclusions}

Suppose $u\in\GL(6,\CC)$ satisfies $u^{-1}H_i u\subset H_j$. After the coordinate change by $u$, every cubic fixed by $H_j$ is fixed by $H_i$ \cite[\S2A]{yu2020moduli}. Thus $\mathcal Z_j\subset\mathcal Z_i$ \cite[Proposition~6.9]{FWZclass}.

\begin{lem}\label{lem:restriction}
Suppose $u^{-1}H_i u\subset H_j$. If the integral action $\rho_j:G_j\to O(\Lambda_0)$ is known for a smooth member of $\mathscr{F}_j$, together with its action on $H^{3,1}$, then the action $\rho_i:G_i\to O(\Lambda_0)$ defined by $\rho_i([h])=\rho_j([u^{-1}hu])$ for $h\in H_i$ gives the integral action for a general member of $\mathscr{F}_i$, up to integral conjugacy.
\end{lem}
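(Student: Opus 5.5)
The argument is a deformation argument: $\mathcal Z_j\subset\mathcal Z_i$, and the cohomological action of a fixed linear group is locally constant on the connected smooth locus $W_i^{\mathrm{sm}}$.

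First, after replacing $H_j$ by $uH_ju^{-1}$ (equivalently, changing coordinates by $u$), we may assume $H_i\subset H_j$ and hence $W_j\subset W_i$. We also have $W_j^{\mathrm{sm}}=W_j\cap W_i^{\mathrm{sm}}$, which is nonempty. Pick $F_0\in W_j^{\mathrm{sm}}$ and set $X_0=V(F_0)$. The subgroup $G_i=H_i/\mu_3I_6\subset G_j\subset\Aut(X_0)$ then acts on $\Lambda_0=H^4_{\mathrm{prim}}(X_0,\ZZ)$ by the restriction $\rho_j|_{G_i}$, which after undoing the coordinate change is $\rho_i([h])=\rho_j([u^{-1}hu])$. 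The eigencharacter on $H^{3,1}(X_0)$ is also the restriction, since by Griffiths' residue formula it equals $\det(M)$ for $M\in H_i$. Hence it is determined by the linear group alone and agrees with the character of a general member of $\mathscr{F}_i$.

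Second, I would transport this action to a general member. The space $W_i^{\mathrm{sm}}$ is the complement of the discriminant hypersurface in the vector space $W_i$, so it is connected. Over it we have the family $\mathcal X\to W_i^{\mathrm{sm}}$ of smooth cubics. The local system $R^4_{\mathrm{prim}}\ZZ$ carries the fiberwise action of $H_i$, which is constant because $H_i$ acts on the family by linear maps. Choose a path from $F_0$ to a very general $F\in W_i^{\mathrm{sm}}$. Parallel transport along it gives an isometry $\Lambda_0(X_0)\to\Lambda_0(X_F)$. This isometry intertwines the two $G_i$-actions, since each $h\in H_i$ gives a fiberwise homeomorphism commuting with transport. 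It also preserves the intersection form and $h^2$. After fixing markings, $\rho_i$ and the action for $X_F$ therefore differ by conjugation in $O(\Lambda_0)$, and the choice of path changes this only by monodromy, which is again integral conjugacy. This is the local constancy recalled in \S2, and it is where connectedness of $W_i^{\mathrm{sm}}$ is essential.

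Finally, the statement concerns the integral action of $G_i$, not of the full $\Aut(X_F)$. If the family $\mathscr{F}_i$ is saturated, then $G_i=\Aut(X_F)$ for general $F$, and the action just obtained is the family's action. The derived lattices $S,T,P,K$ are then computed from $\rho_i$ and the $H^{3,1}$ character as in \S2. The main obstacle is keeping track of the $H^{3,1}$ data in this last step: $T$ and the relevant eigenspace must be recomputed for the smaller group $G_i^s$ and the restricted character, rather than inherited from $\mathscr{F}_j$. I would handle this by noting that the character is fixed by $\det$ on $H_i$, as shown in the first step.
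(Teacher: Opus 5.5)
Your argument is essentially the paper's: connectedness of $W_i^{\mathrm{sm}}$ (a nonempty Zariski-open subset of $W_i$) together with local constancy of the $G_i$-action on integral cohomology. The paper takes this constancy from \cite[Proposition~4.2]{yu2020moduli}, while you spell it out via equivariant parallel transport. The one step you leave conditional is that a general member of $\mathscr{F}_i$ has automorphism group exactly $G_i$; this is not an extra hypothesis, since the paper closes it by citing \cite[Theorem~6.10(1)]{FWZclass} for every classified family, so you should invoke that rather than writing ``if the family is saturated''.
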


\begin{proof}
The coordinate change gives an inclusion of $\PP(W_j^{\mathrm{sm}})$ into $\PP(W_i^{\mathrm{sm}})$. The latter is a nonempty Zariski-open subset of $\PP(W_i)$ and is connected. Local markings make the $G_i$ action on integral cohomology locally constant up to conjugacy there \cite[Proposition~4.2]{yu2020moduli}. Thus the action at a member fixed by $H_j$ is the action at a general member of $\mathscr{F}_i$. By \cite[Theorem~6.10(1)]{FWZclass}, a general member of $\mathscr{F}_i$ has automorphism group $G_i$.
\end{proof}

The restriction gives $T_i=\Lambda_0^{G_i^s}$, $S_i=T_i^\perp$, their embeddings, and the extra isometry. To obtain the restriction from a chosen integral action of $G_j$, we find the subgroup corresponding to the linear inclusion by Algorithm~\ref{alg:subgroup-restriction}, using the isomorphism class of $G_i$, its non-symplectic index, and its traces on $\Lambda_0$.

\subsection{Restricting only the non-symplectic part}

Let $\Gamma$ be the image of $G_j$ in $O(\Lambda_0)$, let $N$ be its symplectic subgroup, and let $fN$ generate $\Gamma/N$. Subgroups containing $N$ are inverse images of subgroups of this cyclic quotient.

\begin{prop}\label{prop:same-part-restriction}
Suppose $\Gamma/N\cong C_m$ and $d\mid m$. The unique subgroup of $\Gamma$ containing $N$ with quotient of order $d$ is $\Gamma_d=\langle N,f^{m/d}\rangle$. If an inclusion of linear groups as in Lemma~\ref{lem:restriction} induces $\Gamma_d$, the corresponding family has the restricted integral action. Its embedded lattices $S$ and $T$ are not changed by restriction.
\end{prop}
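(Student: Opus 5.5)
The statement has three parts, and I would prove them in order: the group-theoretic part by the correspondence theorem, the second part by Lemma~\ref{lem:restriction}, and the third by comparing fixed lattices.

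\emph{Uniqueness of $\Gamma_d$.} By the correspondence theorem, subgroups of $\Gamma$ containing $N$ are in bijection with subgroups of $\Gamma/N\cong C_m$. This quotient is generated by the class of $f$. A cyclic group of order $m$ has exactly one subgroup of order $d$ for each $d\mid m$, namely the one generated by the class of $f^{m/d}$. Its inverse image in $\Gamma$ is $N\langle f^{m/d}\rangle=\langle N,f^{m/d}\rangle$, because $N$ is normal in $\Gamma$. This gives the first claim.

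\emph{The restricted action.} Suppose the linear inclusion $u^{-1}H_iu\subset H_j$ has image $\rho_j(u^{-1}G_iu)=\Gamma_d$. Lemma~\ref{lem:restriction} then says directly that $\Gamma_d$, with the inherited action on $H^{3,1}$, is the integral action of a general member of $\mathscr{F}_i$, up to integral conjugacy. I would also note that the inclusion is compatible with $\chi=\det$, so symplectic elements of $G_i$ go to symplectic elements of $G_j$. Since $\Gamma_d\supset N$ and every element of $N$ acts trivially on $H^{3,1}$, the symplectic part of $\Gamma_d$ is exactly $N$. This agrees with $G_i^s$, as $G_i^s$ is the kernel of $\chi$ restricted to $G_i$.

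\emph{$S$ and $T$ are unchanged.} By definition $T_i=\Lambda_0^{G_i^s}$. Under the restricted action, $G_i^s$ maps onto the symplectic subgroup of $\Gamma_d$, which is $N$. Hence $T_i=\Lambda_0^N=T_j$, and $S_i=T_i^\perp=S_j$ with the same embeddings. For this to describe very general members of $\mathscr{F}_i$, the symplectic subgroup must not grow on $\mathcal Z_i$. I would get this from \cite[Theorem~6.10(1)]{FWZclass}, which says a general member of $\mathscr{F}_i$ has automorphism group $G_i$. The fixed-lattice identity $T_i=\Lambda_0^N$ holds at the member of $\mathscr{F}_j$ used for the restriction, and local constancy of the action on the connected smooth locus carries it to general members.

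The step I expect to need care is this last point. The generic $T$ of the larger family $\mathscr{F}_i$ is defined from the very general member, whereas the restriction computes at a special point of $\mathcal Z_j\subset\mathcal Z_i$. The bridge is that $T=\Lambda_0^{G^s}$ is determined purely by the group action, not by the Hodge structure, so local constancy of the action up to conjugacy suffices. Only the lattices $P$ and $K$ change, through $P=\ker\Phi_d$ of $f^{m/d}|_T$.
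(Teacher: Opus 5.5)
Your proposal is correct and follows the paper's reasoning. The paper gives no separate proof block, but its surrounding text uses the same three steps: uniqueness from the correspondence between subgroups containing $N$ and subgroups of the cyclic quotient $\Gamma/N$; the restricted action from Lemma~\ref{lem:restriction}; and the unchanged $S,T$ because the symplectic subgroup of $\Gamma_d$ is still $N$, so $T=\Lambda_0^N$, with only $P_d$ and $K_d$ changing.
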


We take $f^{m/d}$ on $\Lambda_0$ and restrict it to $S$ and $T$. If $d=1$, we use the identity as the extra isometry. The lattices $P_d=\ker\Phi_d(f^{m/d}|_T)$ and $K_d=(P_d)^\perp_{\Lambda_0}$ need not equal the original $P$ and $K$.

We apply this to the four cyclic action-maximal families; the powers are recorded in \path{remark/catalogue/cyclic_maximal_power_restrictions_result.md}. We also obtain $\mathscr{F}_{35}$ and $\mathscr{F}_{37}$ from $\mathscr{F}_{36}$ and $\mathscr{F}_{38}$, respectively, and $\mathscr{F}_{55}$ from either $\mathscr{F}_{56}$ or $\mathscr{F}_{57}$; see \path{remark/catalogue/ambient_completion/rank18_16/README.md}.

\subsection{Restricting both parts}

When the symplectic subgroup changes, we restrict the action of $\Gamma$, rather than only taking a power of its extra isometry. The symplectic subgroup $N$ of $\Gamma$ acts trivially on $T_j$. For an action of $G_j$ on a smooth cubic with $\rk S_j\geq13$, restriction to $S_j$ is faithful \cite[Lemma~4.3]{FZ26}, and the isomorphism class of $\Gamma$ can be computed from this restriction \cite[Lemma~4.5]{FZ26}. We check $\Gamma$ directly when $\rk S_j<13$. For each subgroup, we compare the multiset of $(\operatorname{ord}(g),\operatorname{tr}(g\mid\Lambda_0))$ over all its elements with that obtained from the action of $H_i$ on $\CC^6$ by \eqref{eq:jacobian-character} and the residue formula.

\begin{prop}\label{prop:changed-part-restriction}
Suppose $u^{-1}H_i u\subset H_j$, and let $\Gamma\subset O(\Lambda_0)$ be the image of the integral action of $G_j$, with symplectic subgroup $N$. Consider the subgroups of $\Gamma$ isomorphic to $G_i$ and having the non-symplectic index, multiset of pairs of orders and traces, $\rk S$, and period dimension of $\mathscr{F}_i$. If they form one conjugacy class in $\Gamma$, restriction to a representative $B$ gives the lattice action of $\mathscr{F}_i$, up to lattice-theoretic equivalence. Its lattices are $T_B=\Lambda_0^{B\cap N}$ and $S_B=T_B^\perp$.
\end{prop}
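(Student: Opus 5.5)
The plan is to reduce the statement to Lemma~\ref{lem:restriction} and then use the uniqueness hypothesis to pin down the subgroup. By Lemma~\ref{lem:restriction}, the linear inclusion $u^{-1}H_iu\subset H_j$ produces an embedding $G_i\hookrightarrow G_j$ such that $\rho_j$ restricted to its image is the integral action of a general member of $\mathscr{F}_i$. Let $B_0=\rho_j(u^{-1}H_iu/\mu_3I_6)\subset\Gamma$. The first task is to verify that $B_0$ is one of the subgroups singled out in the statement.

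Several properties of $B_0$ follow from the geometry. It is isomorphic to $G_i$, since $\rho_j$ is faithful by the global Torelli theorem. Its non-symplectic index is that of $\mathscr{F}_i$, because the action on $H^{3,1}$ is the restriction of the character $\chi=\det$, and $B_0\cap N$ is exactly the image of $H_i^s$. By the formula $\chi(M)=\det(M)$ and \eqref{eq:jacobian-character} with the residue formula, the trace of each element on $\Lambda_0$ is computed from the matrix $u^{-1}hu$, which is conjugate to $h$. Hence the multiset of pairs $(\ord,\tr)$ over $B_0$ agrees with the one computed from $H_i$ on $\CC^6$. The invariant lattice $T_{B_0}=\Lambda_0^{B_0\cap N}$ then has the rank of $T_i$, by Proposition~\ref{prop:equation-character} applied with $H=H_i$, which gives $T_{H_i}=T_X$ for a general member of $\mathscr{F}_i$ in the saturated setting of \cite[Theorem~6.10(1)]{FWZclass}. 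Thus $\rk S$ matches. The period dimension is the dimension of the relevant eigenspace of the extra isometry on $T_{B_0}$, and this matches the family dimension, since $B_0$ is the actual action of a general member and its period space is locally the moduli.

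Once $B_0$ is known to lie in the candidate set, the hypothesis says the candidates form a single $\Gamma$-conjugacy class, so any representative $B$ satisfies $B=\gamma B_0\gamma^{-1}$ with $\gamma\in\Gamma\subset\widetilde O(\Lambda_0)$. Conjugation by $\gamma$ carries the embedded pair $(S_{B_0},T_{B_0})$ to $(S_B,T_B)$, and it carries the extra isometry to an extra isometry. Because $\gamma$ is itself a Hodge isometry of the member of $\mathscr{F}_j$, it preserves the action on $H^{3,1}$. This gives the lattice-theoretic equivalence of \cite[Definition~3.4(3)]{FZ26}. The formulas $T_B=\Lambda_0^{B\cap N}$ and $S_B=T_B^\perp$ follow because $B\cap N$ is the symplectic subgroup of $B$.

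I expect the main obstacle to be the period-dimension and $\rk S$ matching, that is, justifying that the invariants of $B_0$ computed inside $\Lambda_0$ for the special member agree with those of a \emph{general} member of $\mathscr{F}_i$. I would first try to deduce this from the local constancy in Lemma~\ref{lem:restriction}: the pair $(B_0\subset O(\Lambda_0))$ is locally constant on $\PP(W_i^{\mathrm{sm}})$, so all invariants defined from the group action alone, namely $T$, $S$, the eigenspace ranks, and the traces, transfer without further argument. Only the period dimension needs the identification of $\dim\calF_i$ with the $\chi$-eigenspace dimension from \cite[Propositions~4.1 and 4.9]{yu2020moduli}.
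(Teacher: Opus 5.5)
Your proposal is correct and follows the paper's proof. The subgroup $B_0$ coming from the linear inclusion satisfies the listed conditions, the single-conjugacy-class hypothesis makes any representative $B$ a $\Gamma$-conjugate of $B_0$, Lemma~\ref{lem:restriction} identifies the restricted action with that of $\mathscr{F}_i$, and $B\cap N$ is its symplectic subgroup. You just make explicit what the paper leaves implicit: why $B_0$ has the prescribed invariants, and why conjugating by $\gamma\in\Gamma$ preserves the symplectic part and the character on $H^{3,1}$ ($N$ is normal and $\gamma$ is a Hodge isometry).
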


\begin{proof}
The linear inclusion gives a subgroup satisfying these conditions. Every other such subgroup is conjugate to it in $\Gamma$, hence in $O(\Lambda_0)$. Lemma~\ref{lem:restriction} shows that the restriction is the action of $\mathscr{F}_i$. Its symplectic subgroup is $B\cap N$, giving the stated lattices.
\end{proof}

Let $m=|\Gamma/N|$ and let $d\mid m$ be the non-symplectic index of $\mathscr{F}_i$. For $d>1$, every subgroup $B\leq\Gamma$ whose image in $\Gamma/N$ has order $d$ has the form $B=\langle N',b\rangle$, where $N'=B\cap N$ and $b\in Nf^{m/d}$. For $N'\leq N$ and $b$ in this coset,
\[
\langle N',b\rangle\cap N=N'
\quad\Longleftrightarrow\quad
bN'b^{-1}=N'\ \text{and}\ b^d\in N'.
\]
For $d=1$, we have $B=N'$. This proves the completeness of Algorithm~\ref{alg:subgroup-restriction} within the chosen action of $\Gamma$.

\begin{algorithm}[htbp]
\caption{Restriction to a smaller symplectic subgroup}\label{alg:subgroup-restriction}
\begin{algorithmic}[1]
\Require $\Gamma=\langle N,f\rangle\subset O(\Lambda_0)$, $m=|\Gamma/N|$, the non-symplectic index $d\mid m$, and the conditions for $\mathscr{F}_i$ in Proposition~\ref{prop:changed-part-restriction}
\ForAll{$N'\leq N$ up to conjugacy in $N$}
    \If{$d=1$}
        \State Let $\mathcal B_{N'}=\{N'\}$
    \Else
        \State Let $\mathcal B_{N'}=\{\langle N',b\rangle:b\in Nf^{m/d},\ bN'b^{-1}=N',\ b^d\in N'\}$
    \EndIf
    \ForAll{$B\in\mathcal B_{N'}$}
        \If{$B\cap N\ne N'$ or $B\not\cong G_i$ or its multiset of pairs of orders and traces differs from that computed from $H_i$}
            \State \textbf{continue}
        \EndIf
        \State Record $B$ up to conjugacy in $\Gamma$
    \EndFor
\EndFor
\ForAll{representatives $B$ of the recorded classes}
    \State Compute $T_B=\Lambda_0^{B\cap N}$, $S_B=T_B^\perp$, the action on $\Lambda_0$ of a lift in $B$ of a generator of $B/(B\cap N)$, $P_B$, and $K_B$; retain the required $\rk S_B$ and period dimension
\EndFor
\end{algorithmic}
\end{algorithm}

When Algorithm~\ref{alg:subgroup-restriction} gives several subgroup classes satisfying these conditions, we use linear actions preserving the equations of smooth cubics to determine the geometric restrictions. We compare such actions by linear conjugacy and connectedness of the smooth locus of invariant cubics.

For some families, more than one conjugacy class in $\Gamma$ satisfies the conditions. For $\mathscr{F}_{84}$, $\mathscr{F}_{85}$, and $\mathscr{F}_{107}$, a linear normalizer exchanges the two geometric subgroups of each type. Connectedness shows that their integral actions are conjugate. Linear conjugacies and connectedness give the same conclusion for $\mathscr{F}_{97}$ and $\mathscr{F}_{121}$; a block exchange on a smooth cubic does so for $\mathscr{F}_{124}$.

For $\mathscr{F}_{102}$ and $\mathscr{F}_{141}$, we restrict from $\mathscr{F}_{69}$ and $\mathscr{F}_{95}$, respectively. Algorithm~\ref{alg:subgroup-restriction} gives one matching class in each case; see \path{remark/catalogue/cross_symplectic_restrictions.md}.

For $\mathscr{F}_{143}$, the action of $\mathscr{F}_{41}$ is realized on a smooth cubic, and a representative of each of the three subgroup classes given by Algorithm~\ref{alg:subgroup-restriction} acts on that cubic. Choose a generator of one of these cyclic subgroups and a linear lift $g$ preserving the equation of the cubic. The induced actions of $g,g^2,g^3$ on $\Lambda_0$ have traces $(1,1,-2)$. By \eqref{eq:jacobian-character} and the residue formula, $g^6\ne I_6$; after inversion, $g^6=\omega I_6$. Smoothness forces each of the six eigenvalues $\pm\zeta_9^2\omega^j$, $j=0,1,2$, to occur once. The projective actions are therefore conjugate up to inversion. Connectedness of the smooth locus of invariant cubics and self-conjugacy of the family \cite[Proposition~6.6]{FWZclass} show that their integral actions are conjugate; see \path{remark/catalogue/cyclic_spectrum_equivalence_note.md}.

\subsection{An $A_4\times C_2$ family containing the Fermat cubic}

The family $\mathscr{F}_{105}$ has generic automorphism group $A_4\times C_2$ and dimension two. After a change of coordinates, $H_{97}\subset H_{105}$, hence $\mathcal Z_{105}\subset\mathcal Z_{97}$. The prescribed symplectic subgroup in both families is $C_2^2$ \cite{FWZclass}. We take their common embedded symplectic pair from the $\mathscr{F}_{97}$ action obtained from $\mathscr{F}_{74}$. The two subgroup classes in that restriction give equivalent pairs by linear conjugacy and connectedness. Here $\rk S=12$, $\rk T=10$, $\disc S=1024$, and $\disc T=3072$.

The group $A_4\times C_2$ has GAP ID $(24,13)$. The group $H_{105}$ is conjugate to a subgroup of the Fermat group. For the chosen $f$ mapping to a generator of the non-symplectic quotient, Proposition~\ref{prop:equation-character} applied to a linear representative gives
\begin{align*}
\det(t-f|_T)&=\Phi_1\Phi_2^3\Phi_6^3,\\
\det(t-f|_S)&=\Phi_1^3\Phi_2\Phi_3^3\Phi_6.
\end{align*}
The kernels of $\Phi_1(f_T)$, $\Phi_2(f_T)$, and $\Phi_6(f_T)$ have signatures $(1,0)$, $(3,0)$, and $(4,2)$, respectively. The conjugacy and characteristic polynomials are checked in \path{remark/catalogue/no105/verify_no104_no105_character.g}.

We use the embedded pair from $\mathscr{F}_{97}$ and adapt \cite[Algorithms~2 and~3]{FZ26} to these characteristic polynomials and signatures. Here we consider all fitting isometries to impose the characteristic polynomial on $S$.
\begin{enumerate}
\item Enumerate classes of isometries of order six in the genus of $T$ with the given characteristic and minimal polynomials and signatures of the cyclotomic sublattices. Keep only actions whose underlying lattice is isometric to the chosen $T$.
\item Since the primitive gluing has order $2^{10}$, require the action to fix the subgroup of $A_T$ of order three identified with $A_{\Lambda_0}$. This is necessary for an extension in $\widetilde O(\Lambda_0)$.
\item Enumerate all primitive extensions of each remaining action on $T$ to $\Lambda_0$ and all their fitting isometries. Keep the isometries in $\widetilde O(\Lambda_0)$ with the given polynomial on $S$ and with $K$ containing neither short nor long roots.
\end{enumerate}

\begin{prop}\label{prop:no105}
For the embedded symplectic pair of $\mathscr{F}_{97}$ and the characteristic polynomials and signatures above, exactly one class of primitive extensions has a fitting isometry $f\in\widetilde O(\Lambda_0)$ for which $K$ contains neither short nor long roots. It gives the lattice action of $\mathscr{F}_{105}$, up to lattice-theoretic equivalence, with $\rk P=6$ and $\rk K=16$.
\end{prop}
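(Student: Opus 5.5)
The plan is to run the three-step enumeration described just before the statement and then show that its unique output is the geometric action. First, fix the embedded symplectic pair $(S,T)$ of $\mathscr{F}_{97}$. Here $\rk S=12$, $\rk T=10$, and the gluing has order $2^{10}$. Then enumerate, up to conjugacy in $O(T)$, the isometries $f_T$ of order six with characteristic polynomial $\Phi_1\Phi_2^3\Phi_6^3$, minimal polynomial $\Phi_1\Phi_2\Phi_6$, and cyclotomic sublattices of signatures $(1,0)$, $(3,0)$, $(4,2)$. Following \cite[Algorithms~2 and~3]{FZ26}, this is done by enumerating the possible cyclotomic lattices (the $\Phi_6$-part as a Hermitian lattice over $\ZZ[\omega]$ of rank three) and gluing them equivariantly. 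From the resulting classes we keep only those whose underlying lattice is isometric to the chosen $T$ and which fix the order-three subgroup $A_T[3]\cong A_{\Lambda_0}$; the latter is necessary for an extension lying in $\widetilde O(\Lambda_0)$.

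For each surviving $f_T$, enumerate all equivariant primitive extensions of $S\oplus T$ to $\Lambda_0$, and within each extension class all fitting isometries. A fitting isometry is determined by $f_T$ together with an $f_S\in O(S)$ compatible on the glue. Since $\widetilde O(S)$ is small here and $O(S)\to O(q_S)$ is computable, this is a finite coset search. We keep the $f$ that lie in $\widetilde O(\Lambda_0)$ and satisfy $\det(t-f|_S)=\Phi_1^3\Phi_2\Phi_3^3\Phi_6$. For these, compute $P=\ker\Phi_6(f_T)$, which has rank six, and $K=P^\perp$, which has rank $16$, and reject any $K$ containing a short or long root. The claim is that exactly one extension class survives. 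This is a finite computation in OSCAR, and I would record its output in the same way as the earlier sections.

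For identification with $\mathscr{F}_{105}$, the argument mirrors the proof of Proposition~\ref{prop:five-searches}. By Proposition~\ref{prop:equation-character} and the Hodge--Riemann relations, the geometric action of a general member of $\mathscr{F}_{105}$ (via $H_{97}\subset H_{105}$ and Lemma~\ref{lem:restriction}) has exactly this embedded symplectic pair, these polynomials and signatures, and $f\in\widetilde O(\Lambda_0)$. Its $K$ is contained in the algebraic lattice and so has no roots, by smoothness. Hence the geometric action passes all tests, and uniqueness forces it to be the recorded class. Two fitting isometries in the same class differ by $\widetilde O(S)$ and therefore generate the same group with $G^s$, so the result is well defined up to lattice-theoretic equivalence. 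As a consistency check, the eigenspace for $\chi(f)$ in $P_\CC$ has dimension three, which gives a ball of dimension two, matching $\dim\mathscr{F}_{105}$.

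The main obstacle is the enumeration of fitting isometries. Because $\rk T=10$ is large and the glue is a large $2$-group of order $2^{10}$, the number of extension classes and of compatible $f_S$ can blow up. I would first prune aggressively using the $A_T[3]$ condition and the characteristic polynomial on $S$. I would also use the stabilizer in $O(q_S)\times O(q_T)$ to reduce the gluing anti-isometries to orbit representatives before lifting to fitting isometries.
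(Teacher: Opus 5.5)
Your proposal is correct and follows the paper's proof. Both run the adapted enumeration, obtain a unique extension class, and then show that a smooth member's action passes every test: the equation-derived polynomials, $f\in\widetilde O(\Lambda_0)$, and $K=P^\perp$ being root-free because it is orthogonal to the period. The only differences are minor. The paper runs this realization step on the Fermat cubic, into whose group $H_{105}$ is conjugate, and transfers it to a general member via Lemma~\ref{lem:restriction}, whereas you argue directly at a general member. The paper also adds a faithfulness-on-$S$ argument to certify the GAP ID $(24,13)$ and $|\widetilde O(S)|=|\widetilde O(K)|=4$, but these serve as consistency checks rather than steps of the identification.
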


\begin{proof}
Let $N\cong C_2^2$ be the symplectic subgroup and $f$ a fitting isometry. Each nonidentity element of $N$ has anti-invariant rank $8$ on $S$ by \cite[Theorem~1.1(2)]{MarquandInvolution}. The given characteristic polynomial gives $\ord(f|_S)=6$ and anti-invariant rank $3$ for $f^3|_S$. Thus no $nf^k$ except the identity acts trivially on $S$: for $k\not\equiv0,3\pmod6$ the orders differ, and for $k\equiv3$ the anti-invariant ranks differ. The group $\langle N,f\rangle$ therefore acts faithfully on $S$, so its GAP ID can be computed from this restriction.

The enumeration gives exactly one extension class satisfying the polynomial and root conditions, with a fitting isometry in $\widetilde O(\Lambda_0)$. For this action, the period domain has dimension two, the generated group has GAP ID $(24,13)$, and $|\widetilde O(S)|=|\widetilde O(K)|=4$; see \path{remark/catalogue/no105/no105_result.md} and \path{remark/catalogue/no105/no105_extensions.mrdi}.

The Fermat cubic is a smooth member of $\mathscr{F}_{105}$ with these characteristic polynomials and embedded symplectic pair. Its action lies in $\widetilde O(\Lambda_0)$ and $K$ contains neither short nor long roots, so it belongs to the unique class. Lemma~\ref{lem:restriction} gives the same action for a general member of $\mathscr{F}_{105}$. The characteristic polynomial $\Phi_1\Phi_2\Phi_3\Phi_6^3$ on $T$ distinguishes $\mathscr{F}_{104}$ from this action.
\end{proof}

Then we come to the proof of the main theorem.

\begin{proof}[Proof of Theorem~\ref{thm:catalogue}]
\S\ref{sec:two-pairs} and \S\ref{sec:known} give the correspondence between the previously classified actions and the families. Propositions~\ref{prop:five-searches} and \ref{prop:c2c4} determine the six further action-maximal cases. Lemma~\ref{lem:restriction} and Propositions~\ref{prop:same-part-restriction} and \ref{prop:changed-part-restriction} give the restrictions. Proposition~\ref{prop:no105} treats $\mathscr{F}_{105}$. The source list in \path{remark/catalogue/ambient_completion/sources/family_156_sources_current.md} records one of these cases for each of the $156$ families of \cite{FWZclass}. The actions obtained by direct calculation satisfy the embedding and period conditions, while the restricted actions are realized by smooth cubic fourfolds. The construction of $A$ in \S\ref{sec:preliminaries} proves the last assertion.
\end{proof}

\section{Table of discriminant forms}\label{sec:data}
Table~\ref{tab:family-discriminants} lists the automorphism group $G$, $\rk S$, the index, the dimension, and the forms $q_S,q_T,q_P,q_K$ for each family. Here the index is the non-symplectic index $[G:G^s]$. The symbol $0$ denotes the trivial form. The MRDI file cited in the Introduction records the primitive embeddings and group actions.

\begingroup\footnotesize\setlength{\tabcolsep}{1.75pt}
\setlength{\LTleft}{0pt}\setlength{\LTright}{\fill}
\begin{longtable}{@{}l>{\raggedright\arraybackslash}p{72pt}lllllll@{}}
\caption{Discriminant forms of $S,T,P,K$ for the $156$ families}\label{tab:family-discriminants}\\
\hline
$i$ & $G$ & $\rk S$ & index & dim. & $q_S$ & $q_T$ & $q_P$ & $q_K$\\
\hline
\endfirsthead
\hline
$i$ & $G$ & $\rk S$ & index & dim. & $q_S$ & $q_T$ & $q_P$ & $q_K$\\
\hline
\endhead
1 & $C_3^5:S_6$ & 20 & 6 & 0 & $3^{+2}9^{-1}$ & $3^{+1}9^{+1}$ & $3^{+1}9^{+1}$ & $3^{+2}9^{-1}$\\
2 & $A_7$ & 20 & 1 & 0 & $3^{-1} 5^{+1} 7^{-1}$ & $3^{-2} 5^{+1} 7^{+1}$ & $3^{-2} 5^{+1} 7^{+1}$ & $3^{-1} 5^{+1} 7^{-1}$\\
3 & $S_7$ & 20 & 2 & 0 & $3^{-1} 5^{+1} 7^{-1}$ & $5^{+1} 7^{+1}$ & $5^{+1} 7^{+1}$ & $3^{-1} 5^{+1} 7^{-1}$\\
4 & $3^{1+4}:(C_4^2:C_2)$ & 20 & 4 & 0 & $2^{-2}_{2} 3^{-3}$ & $2^{+2}_{2} 3^{+2}$ & $2^{+2}_{2} 3^{+2}$ & $2^{-2}_{2} 3^{-3}$\\
5 & $M_{10}$ & 20 & 1 & 0 & $[2^{+1}4^{+1}]_{2} 3^{+1} 5^{+1}$ & $[2^{-1}4^{+1}]_{2} 3^{+2} 5^{+1}$ & $[2^{-1}4^{+1}]_{2} 3^{+2} 5^{+1}$ & $[2^{+1}4^{+1}]_{2} 3^{+1} 5^{+1}$\\
6 & $M_{10}$ & 20 & 1 & 0 & $[2^{+1}4^{+1}]_{2} 3^{+1} 5^{+1}$ & $[2^{-1}4^{+1}]_{2} 3^{+2} 5^{+1}$ & $[2^{-1}4^{+1}]_{2} 3^{+2} 5^{+1}$ & $[2^{+1}4^{+1}]_{2} 3^{+1} 5^{+1}$\\
7 & $L_2(11)\times C_3$ & 20 & 3 & 0 & $11^{+2}$ & $3^{-1} 11^{+2}$ & $3^{-1} 11^{+2}$ & $11^{+2}$\\
8 & $S_{3,5}\times C_3$ & 20 & 6 & 0 & $3^{-2} 5^{-2}$ & $3^{-1} 5^{-2}$ & $3^{-1} 5^{-2}$ & $3^{-2} 5^{-2}$\\
\hline
9 & $3^{1+4}:D_8$ & 19 & 2 & 1 & $2^{-1}_{3} 3^{-4}$ & $2^{-1}_{5} 3^{-3}$ & $2^{-1}_{5} 3^{-3}$ & $2^{-1}_{3} 3^{-4}$\\
10 & $A_6$ & 19 & 1 & 1 & $4^{-1}_{3} 3^{+2} 5^{+1}$ & $4^{-1}_{5} 3^{-3} 5^{+1}$ & $4^{-1}_{5} 3^{-3} 5^{+1}$ & $4^{-1}_{3} 3^{+2} 5^{+1}$\\
11 & $S_6$ & 19 & 2 & 1 & $4^{-1}_{3} 3^{+2} 5^{+1}$ & $4^{-1}_{5} 3^{+1} 5^{+1}$ & $4^{-1}_{5} 3^{+1} 5^{+1}$ & $4^{-1}_{3} 3^{+2} 5^{+1}$\\
12 & $L_2(7)$ & 19 & 1 & 1 & $4^{+1}_{7} 7^{+2}$ & $4^{+1}_{1} 3^{-1} 7^{+2}$ & $4^{+1}_{1} 3^{-1} 7^{+2}$ & $4^{+1}_{7} 7^{+2}$\\
13 & $L_2(7):C_2$ & 19 & 2 & 0 & $4^{+1}_{7} 7^{+2}$ & $4^{+1}_{1} 3^{-1} 7^{+2}$ & $[2^{+1}4^{+1}]_{0} 7^{-1}$ & $[2^{-1}4^{+1}]_{4} 3^{-1} 7^{+1}$\\
14 & $L_2(7):C_2$ & 19 & 2 & 0 & $4^{+1}_{7} 7^{+2}$ & $4^{+1}_{1} 3^{-1} 7^{+2}$ & $[2^{+1}4^{+1}]_{0} 7^{-1}$ & $[2^{+1}4^{+1}]_{0} 3^{-1} 7^{+1}$\\
15 & $S_5$ & 19 & 1 & 1 & $4^{-1}_{5} 3^{-1} 5^{-2}$ & $4^{-1}_{3} 3^{-2} 5^{-2}$ & $4^{-1}_{3} 3^{-2} 5^{-2}$ & $4^{-1}_{5} 3^{-1} 5^{-2}$\\
16 & $S_5\times C_2$ & 19 & 2 & 1 & $4^{-1}_{5} 3^{-1} 5^{-2}$ & $4^{-1}_{3} 5^{-2}$ & $4^{-1}_{3} 5^{-2}$ & $4^{-1}_{5} 3^{-1} 5^{-2}$\\
17 & $M_9$ & 19 & 1 & 1 & $2^{+3}_{5} 3^{+1}9^{+1}$ & $2^{+3}_{3} 3^{+2}9^{-1}$ & $2^{+3}_{3} 3^{+2}9^{-1}$ & $2^{+3}_{5} 3^{+1}9^{+1}$\\
18 & $M_9:C_3$ & 19 & 3 & 0 & $2^{+3}_{5} 3^{+1}9^{+1}$ & $2^{+3}_{3} 3^{+2}9^{-1}$ & $2^{-2}_{\mathrm{II}} 3^{-1}9^{-1}$ & $2^{-2}_{\mathrm{II}} 3^{-2}9^{+1}$\\
19 & $N_{72}\times C_2$ & 19 & 2 & 1 & $4^{+1}_{7} 3^{+2}9^{+1}$ & $4^{+1}_{1} 3^{+1}9^{-1}$ & $4^{+1}_{1} 3^{+1}9^{-1}$ & $4^{+1}_{7} 3^{+2}9^{+1}$\\
20 & $T_{48}$ & 19 & 1 & 1 & $2^{-1}_{5}8^{-2}_{\mathrm{II}} 3^{+1}$ & $2^{-1}_{3}8^{-2}_{\mathrm{II}} 3^{+2}$ & $2^{-1}_{3}8^{-2}_{\mathrm{II}} 3^{+2}$ & $2^{-1}_{5}8^{-2}_{\mathrm{II}} 3^{+1}$\\
\hline
21 & $3^{1+4}:C_2^2$ & 18 & 2 & 2 & $3^{-5}$ & $3^{+4}$ & $3^{+4}$ & $3^{-5}$\\
22 & $3^{1+4}:(C_4\times C_2)$ & 18 & 4 & 1 & $3^{-5}$ & $3^{+4}$ & $3^{+4}$ & $3^{-5}$\\
23 & $((C_3\times(C_3^3:\allowbreak C_3)):\allowbreak C_3):\allowbreak (C_2^2)$ & 18 & 6 & 1 & $3^{-5}$ & $3^{+4}$ & $3^{+4}$ & $3^{-5}$\\
24 & $((C_3 \times ((C_3^3) :\allowbreak  C_3)) :\allowbreak  C_3) :\allowbreak  (C_4 \times C_2)$ & 18 & 12 & 0 & $3^{-5}$ & $3^{+4}$ & $3^{+4}$ & $3^{-5}$\\
25 & $A_{4,3}$ & 18 & 1 & 2 & $4^{-2}_{\mathrm{II}} 3^{+3}$ & $4^{-2}_{\mathrm{II}} 3^{+4}$ & $4^{-2}_{\mathrm{II}} 3^{+4}$ & $4^{-2}_{\mathrm{II}} 3^{+3}$\\
26 & $A_{4,3}\times C_2$ & 18 & 2 & 1 & $4^{-2}_{\mathrm{II}} 3^{+3}$ & $4^{-2}_{\mathrm{II}} 3^{+4}$ & $8^{+1}_{1} 3^{-3}$ & $8^{+1}_{7} 3^{-4}$\\
27 & $S_{4,3}$ & 18 & 2 & 2 & $4^{-2}_{\mathrm{II}} 3^{+3}$ & $4^{-2}_{\mathrm{II}} 3^{-2}$ & $4^{-2}_{\mathrm{II}} 3^{-2}$ & $4^{-2}_{\mathrm{II}} 3^{+3}$\\
28 & $S_{4,3}\times C_3$ & 18 & 6 & 1 & $4^{-2}_{\mathrm{II}} 3^{+3}$ & $4^{-2}_{\mathrm{II}} 3^{-2}$ & $4^{-2}_{\mathrm{II}} 3^{-2}$ & $4^{-2}_{\mathrm{II}} 3^{+3}$\\
29 & $A_5$ & 18 & 1 & 2 & $2^{-2}_{\mathrm{II}} 3^{-1} 5^{-2}$ & $2^{-2}_{\mathrm{II}} 3^{-2} 5^{-2}$ & $2^{-2}_{\mathrm{II}} 3^{-2} 5^{-2}$ & $2^{-2}_{\mathrm{II}} 3^{-1} 5^{-2}$\\
30 & $A_5\times C_3$ & 18 & 3 & 1 & $2^{-2}_{\mathrm{II}} 3^{-1} 5^{-2}$ & $2^{-2}_{\mathrm{II}} 3^{-2} 5^{-2}$ & $2^{-2}_{\mathrm{II}} 3^{-2} 5^{-2}$ & $2^{-2}_{\mathrm{II}} 3^{-1} 5^{-2}$\\
31 & $S_5$ & 18 & 2 & 2 & $2^{-2}_{\mathrm{II}} 3^{-1} 5^{-2}$ & $2^{-2}_{\mathrm{II}} 5^{-2}$ & $2^{-2}_{\mathrm{II}} 5^{-2}$ & $2^{-2}_{\mathrm{II}} 3^{-1} 5^{-2}$\\
32 & $S_5\times C_3$ & 18 & 6 & 1 & $2^{-2}_{\mathrm{II}} 3^{-1} 5^{-2}$ & $2^{-2}_{\mathrm{II}} 5^{-2}$ & $2^{-2}_{\mathrm{II}} 5^{-2}$ & $2^{-2}_{\mathrm{II}} 3^{-1} 5^{-2}$\\
33 & $C_3^{2}.C_4$ & 18 & 1 & 2 & $2^{+2}_{6} 3^{+2}9^{+1}$ & $2^{-2}_{6} 3^{-3}9^{-1}$ & $2^{-2}_{6} 3^{-3}9^{-1}$ & $2^{+2}_{6} 3^{+2}9^{+1}$\\
34 & $N_{72}$ & 18 & 2 & 2 & $2^{+2}_{6} 3^{+2}9^{+1}$ & $2^{+2}_{2} 3^{+1}9^{-1}$ & $2^{+2}_{2} 3^{+1}9^{-1}$ & $2^{+2}_{6} 3^{+2}9^{+1}$\\
35 & $S_{3,3}\times C_2$ & 18 & 2 & 2 & $2^{-2}_{\mathrm{II}} 3^{-3}9^{+1}$ & $2^{-2}_{\mathrm{II}} 3^{+2}9^{-1}$ & $2^{-2}_{\mathrm{II}} 3^{+2}9^{-1}$ & $2^{-2}_{\mathrm{II}} 3^{-3}9^{+1}$\\
36 & $S_{3,3}\times C_6$ & 18 & 6 & 1 & $2^{-2}_{\mathrm{II}} 3^{-3}9^{+1}$ & $2^{-2}_{\mathrm{II}} 3^{+2}9^{-1}$ & $2^{-2}_{\mathrm{II}} 3^{+2}9^{-1}$ & $2^{-2}_{\mathrm{II}} 3^{-3}9^{+1}$\\
37 & $N_{72}$ & 18 & 2 & 2 & $2^{-2}_{\mathrm{II}} 3^{-3}9^{+1}$ & $2^{-2}_{\mathrm{II}} 3^{+2}9^{-1}$ & $2^{-2}_{\mathrm{II}} 3^{+2}9^{-1}$ & $2^{-2}_{\mathrm{II}} 3^{-3}9^{+1}$\\
38 & $N_{72}\times C_3$ & 18 & 6 & 1 & $2^{-2}_{\mathrm{II}} 3^{-3}9^{+1}$ & $2^{-2}_{\mathrm{II}} 3^{+2}9^{-1}$ & $2^{-2}_{\mathrm{II}} 3^{+2}9^{-1}$ & $2^{-2}_{\mathrm{II}} 3^{-3}9^{+1}$\\
39 & $F_{21}$ & 18 & 1 & 2 & $7^{-3}$ & $3^{-1} 7^{+3}$ & $3^{-1} 7^{+3}$ & $7^{-3}$\\
40 & $C_7:C_6$ & 18 & 2 & 1 & $7^{-3}$ & $3^{-1} 7^{+3}$ & $2^{+1}_{7} 7^{-2}$ & $2^{+1}_{1} 3^{-1} 7^{-2}$\\
41 & $(C_7:C_6)\times C_3$ & 18 & 6 & 0 & $7^{-3}$ & $3^{-1} 7^{+3}$ & $3^{+1} 7^{-2}$ & $3^{+2} 7^{-2}$\\
42 & $\mathrm{Hol}(5)$ & 18 & 1 & 2 & $2^{+2}_{6} 5^{+3}$ & $2^{+2}_{2} 3^{-1} 5^{+3}$ & $2^{+2}_{2} 3^{-1} 5^{+3}$ & $2^{+2}_{6} 5^{+3}$\\
43 & $\mathrm{QD}_{16}$ & 18 & 1 & 2 & $[2^{+1}4^{+1}]_{2}8^{+2}_{\mathrm{II}}$ & $[2^{-1}4^{+1}]_{2}8^{+2}_{\mathrm{II}} 3^{-1}$ & $[2^{-1}4^{+1}]_{2}8^{+2}_{\mathrm{II}} 3^{-1}$ & $[2^{+1}4^{+1}]_{2}8^{+2}_{\mathrm{II}}$\\
44 & $(C_8\times C_2): C_2$ & 18 & 2 & 1 & $[2^{+1}4^{+1}]_{2}8^{+2}_{\mathrm{II}}$ & $[2^{-1}4^{+1}]_{2}8^{+2}_{\mathrm{II}} 3^{-1}$ & $[4^{+1}8^{+2}]_{7}$ & $[4^{+1}8^{+2}]_{1} 3^{-1}$\\
\hline
45 & $S_4$ & 17 & 1 & 3 & $4^{+3}_{5} 3^{+2}$ & $4^{+3}_{3} 3^{-3}$ & $4^{+3}_{3} 3^{-3}$ & $4^{+3}_{5} 3^{+2}$\\
46 & $S_4\times C_2$ & 17 & 2 & 2 & $4^{+3}_{5} 3^{+2}$ & $4^{+3}_{3} 3^{-3}$ & $[4^{-1}8^{+1}]_{4} 3^{+2}$ & $[4^{-1}8^{+1}]_{4} 3^{-3}$\\
47 & $S_4\times C_2$ & 17 & 2 & 3 & $4^{+3}_{5} 3^{+2}$ & $4^{+3}_{3} 3^{+1}$ & $4^{+3}_{3} 3^{+1}$ & $4^{+3}_{5} 3^{+2}$\\
48 & $Q_8$ & 17 & 1 & 3 & $2^{+3}_{5}8^{-2}_{\mathrm{II}}$ & $2^{+3}_{3}8^{-2}_{\mathrm{II}} 3^{-1}$ & $2^{+3}_{3}8^{-2}_{\mathrm{II}} 3^{-1}$ & $2^{+3}_{5}8^{-2}_{\mathrm{II}}$\\
49 & $(C_4\times C_2):C_2$ & 17 & 2 & 2 & $2^{+3}_{5}8^{-2}_{\mathrm{II}}$ & $2^{+3}_{3}8^{-2}_{\mathrm{II}} 3^{-1}$ & $2^{+2}_{2}8^{+2}_{6}$ & $2^{-2}_{2}8^{+2}_{2} 3^{-1}$\\
50 & $\SL(2,3)$ & 17 & 3 & 1 & $2^{+3}_{5}8^{-2}_{\mathrm{II}}$ & $2^{+3}_{3}8^{-2}_{\mathrm{II}} 3^{-1}$ & $2^{-2}_{\mathrm{II}}8^{-2}_{\mathrm{II}} 3^{-2}$ & $2^{-2}_{\mathrm{II}}8^{-2}_{\mathrm{II}} 3^{-1}$\\
51 & $C_4^2: C_2$ & 17 & 4 & 1 & $2^{+3}_{5}8^{-2}_{\mathrm{II}}$ & $2^{+3}_{3}8^{-2}_{\mathrm{II}} 3^{-1}$ & $2^{+2}_{2}8^{+2}_{6}$ & $2^{-2}_{2}8^{+2}_{2} 3^{-1}$\\
\hline
52 & $A_{3,3}$ & 16 & 1 & 4 & $3^{+4}9^{+1}$ & $3^{-5}9^{-1}$ & $3^{-5}9^{-1}$ & $3^{+4}9^{+1}$\\
53 & $A_{3,3}\times C_2$ & 16 & 2 & 2 & $3^{+4}9^{+1}$ & $3^{-5}9^{-1}$ & $2^{+2}_{0} 3^{+4}$ & $2^{-2}_{4} 3^{-5}$\\
54 & $C_3^2:C_6$ & 16 & 3 & 1 & $3^{+4}9^{+1}$ & $3^{-5}9^{-1}$ & $3^{-2}9^{-2}$ & $3^{+3}9^{-2}$\\
55 & $S_{3,3}$ & 16 & 2 & 4 & $3^{+4}9^{+1}$ & $3^{+3}9^{-1}$ & $3^{+3}9^{-1}$ & $3^{+4}9^{+1}$\\
56 & $S_{3,3}\times C_3$ & 16 & 6 & 2 & $3^{+4}9^{+1}$ & $3^{+3}9^{-1}$ & $3^{+3}9^{-1}$ & $3^{+4}9^{+1}$\\
57 & $S_{3,3}\times C_3$ & 16 & 6 & 2 & $3^{+4}9^{+1}$ & $3^{+3}9^{-1}$ & $3^{+3}9^{-1}$ & $3^{+4}9^{+1}$\\
58 & $D_{12}$ & 16 & 1 & 4 & $2^{+4}_{\mathrm{II}} 3^{+4}$ & $2^{+4}_{\mathrm{II}} 3^{-5}$ & $2^{+4}_{\mathrm{II}} 3^{-5}$ & $2^{+4}_{\mathrm{II}} 3^{+4}$\\
59 & $(C_6\times C_2):C_2$ & 16 & 2 & 2 & $2^{+4}_{\mathrm{II}} 3^{+4}$ & $2^{+4}_{\mathrm{II}} 3^{-5}$ & $4^{-2}_{\mathrm{II}} 3^{+4}$ & $4^{-2}_{\mathrm{II}} 3^{-5}$\\
60 & $D_{12}\times C_2$ & 16 & 2 & 2 & $2^{+4}_{\mathrm{II}} 3^{+4}$ & $2^{+4}_{\mathrm{II}} 3^{-5}$ & $4^{+2}_{2} 3^{-3}$ & $4^{+2}_{6} 3^{-4}$\\
61 & $S_3\times C_6$ & 16 & 3 & 2 & $2^{+4}_{\mathrm{II}} 3^{+4}$ & $2^{+4}_{\mathrm{II}} 3^{-5}$ & $2^{+4}_{\mathrm{II}} 3^{-5}$ & $2^{+4}_{\mathrm{II}} 3^{+4}$\\
62 & $((C_6\times C_2):\allowbreak C_2)\times C_3$ & 16 & 6 & 1 & $2^{+4}_{\mathrm{II}} 3^{+4}$ & $2^{+4}_{\mathrm{II}} 3^{-5}$ & $4^{-2}_{\mathrm{II}} 3^{+4}$ & $4^{-2}_{\mathrm{II}} 3^{-5}$\\
63 & $D_{12}\times C_2$ & 16 & 2 & 4 & $2^{+4}_{\mathrm{II}} 3^{+4}$ & $2^{+4}_{\mathrm{II}} 3^{+3}$ & $2^{+4}_{\mathrm{II}} 3^{+3}$ & $2^{+4}_{\mathrm{II}} 3^{+4}$\\
64 & $S_3\times C_6\times C_2$ & 16 & 6 & 2 & $2^{+4}_{\mathrm{II}} 3^{+4}$ & $2^{+4}_{\mathrm{II}} 3^{+3}$ & $2^{+4}_{\mathrm{II}} 3^{+3}$ & $2^{+4}_{\mathrm{II}} 3^{+4}$\\
65 & $A_4$ & 16 & 1 & 4 & $2^{-2}_{\mathrm{II}}4^{-2}_{\mathrm{II}} 3^{+2}$ & $2^{-2}_{\mathrm{II}}4^{-2}_{\mathrm{II}} 3^{-3}$ & $2^{-2}_{\mathrm{II}}4^{-2}_{\mathrm{II}} 3^{-3}$ & $2^{-2}_{\mathrm{II}}4^{-2}_{\mathrm{II}} 3^{+2}$\\
66 & $A_4\times C_2$ & 16 & 2 & 3 & $2^{-2}_{\mathrm{II}}4^{-2}_{\mathrm{II}} 3^{+2}$ & $2^{-2}_{\mathrm{II}}4^{-2}_{\mathrm{II}} 3^{-3}$ & $2^{-2}_{\mathrm{II}}8^{+1}_{1} 3^{+2}$ & $2^{-2}_{\mathrm{II}}8^{+1}_{7} 3^{-3}$\\
67 & $A_4\times C_3$ & 16 & 3 & 2 & $2^{-2}_{\mathrm{II}}4^{-2}_{\mathrm{II}} 3^{+2}$ & $2^{-2}_{\mathrm{II}}4^{-2}_{\mathrm{II}} 3^{-3}$ & $2^{-2}_{\mathrm{II}}4^{-2}_{\mathrm{II}} 3^{-3}$ & $2^{-2}_{\mathrm{II}}4^{-2}_{\mathrm{II}} 3^{+2}$\\
68 & $S_4$ & 16 & 2 & 4 & $2^{-2}_{\mathrm{II}}4^{-2}_{\mathrm{II}} 3^{+2}$ & $2^{-2}_{\mathrm{II}}4^{-2}_{\mathrm{II}} 3^{+1}$ & $2^{-2}_{\mathrm{II}}4^{-2}_{\mathrm{II}} 3^{+1}$ & $2^{-2}_{\mathrm{II}}4^{-2}_{\mathrm{II}} 3^{+2}$\\
69 & $S_4\times C_3$ & 16 & 6 & 2 & $2^{-2}_{\mathrm{II}}4^{-2}_{\mathrm{II}} 3^{+2}$ & $2^{-2}_{\mathrm{II}}4^{-2}_{\mathrm{II}} 3^{+1}$ & $2^{-2}_{\mathrm{II}}4^{-2}_{\mathrm{II}} 3^{+1}$ & $2^{-2}_{\mathrm{II}}4^{-2}_{\mathrm{II}} 3^{+2}$\\
70 & $D_{10}$ & 16 & 1 & 4 & $5^{+4}$ & $3^{-1} 5^{+4}$ & $3^{-1} 5^{+4}$ & $5^{+4}$\\
71 & $D_{20}$ & 16 & 2 & 2 & $5^{+4}$ & $3^{-1} 5^{+4}$ & $2^{+2}_{\mathrm{II}} 5^{-3}$ & $2^{+2}_{\mathrm{II}} 3^{-1} 5^{-3}$\\
72 & $D_{10}\times C_3$ & 16 & 3 & 2 & $5^{+4}$ & $3^{-1} 5^{+4}$ & $3^{-1} 5^{+4}$ & $5^{+4}$\\
\hline
73 & $D_8$ & 15 & 1 & 5 & $4^{+5}_{7}$ & $4^{+5}_{1} 3^{-1}$ & $4^{+5}_{1} 3^{-1}$ & $4^{+5}_{7}$\\
74 & $D_8\times C_2$ & 15 & 2 & 4 & $4^{+5}_{7}$ & $4^{+5}_{1} 3^{-1}$ & $[4^{-3}8^{+1}]_{2}$ & $[4^{-3}8^{+1}]_{6} 3^{-1}$\\
75 & $(C_4\times C_2):C_2$ & 15 & 2 & 3 & $4^{+5}_{7}$ & $4^{+5}_{1} 3^{-1}$ & $4^{+5}_{1}$ & $4^{+5}_{7} 3^{-1}$\\
76 & $D_{16}$ & 15 & 2 & 2 & $4^{+5}_{7}$ & $4^{+5}_{1} 3^{-1}$ & $[2^{+1}4^{+1}]_{0}8^{+2}_{\mathrm{II}}$ & $[2^{+1}4^{+1}]_{0}8^{+2}_{\mathrm{II}} 3^{-1}$\\
\hline
77 & $C_{4}$ & 14 & 1 & 6 & $2^{+2}_{6}4^{+4}_{\mathrm{II}}$ & $2^{-2}_{6}4^{+4}_{\mathrm{II}} 3^{-1}$ & $2^{-2}_{6}4^{+4}_{\mathrm{II}} 3^{-1}$ & $2^{+2}_{6}4^{+4}_{\mathrm{II}}$\\
78 & $D_8$ & 14 & 2 & 5 & $2^{+2}_{6}4^{+4}_{\mathrm{II}}$ & $2^{-2}_{6}4^{+4}_{\mathrm{II}} 3^{-1}$ & $2^{+2}_{2}4^{+2}_{\mathrm{II}}8^{+1}_{1}$ & $2^{-2}_{2}4^{+2}_{\mathrm{II}}8^{+1}_{7} 3^{-1}$\\
79 & $C_{4} \times C_{2}$ & 14 & 2 & 4 & $2^{+2}_{6}4^{+4}_{\mathrm{II}}$ & $2^{-2}_{6}4^{+4}_{\mathrm{II}} 3^{-1}$ & $2^{+2}_{2}4^{+4}_{\mathrm{II}}$ & $2^{-2}_{2}4^{+4}_{\mathrm{II}} 3^{-1}$\\
80 & $D_8$ & 14 & 2 & 3 & $2^{+2}_{6}4^{+4}_{\mathrm{II}}$ & $2^{-2}_{6}4^{+4}_{\mathrm{II}} 3^{-1}$ & $2^{+3}_{1}8^{+2}_{\mathrm{II}}$ & $2^{+3}_{7}8^{+2}_{\mathrm{II}} 3^{-1}$\\
81 & $C_{4} \times C_{4}$ & 14 & 4 & 2 & $2^{+2}_{6}4^{+4}_{\mathrm{II}}$ & $2^{-2}_{6}4^{+4}_{\mathrm{II}} 3^{-1}$ & $2^{+2}_{2}4^{+4}_{\mathrm{II}}$ & $2^{-2}_{2}4^{+4}_{\mathrm{II}} 3^{-1}$\\
82 & $C_8:C_2$ & 14 & 4 & 2 & $2^{+2}_{6}4^{+4}_{\mathrm{II}}$ & $2^{-2}_{6}4^{+4}_{\mathrm{II}} 3^{-1}$ & $2^{+2}_{2}4^{+4}_{\mathrm{II}}$ & $2^{-2}_{2}4^{+4}_{\mathrm{II}} 3^{-1}$\\
83 & $S_3$ & 14 & 1 & 6 & $2^{-2}_{\mathrm{II}} 3^{-5}$ & $2^{-2}_{\mathrm{II}} 3^{-6}$ & $2^{-2}_{\mathrm{II}} 3^{-6}$ & $2^{-2}_{\mathrm{II}} 3^{-5}$\\
84 & $D_{12}$ & 14 & 2 & 3 & $2^{-2}_{\mathrm{II}} 3^{-5}$ & $2^{-2}_{\mathrm{II}} 3^{-6}$ & $[2^{-2}4^{+1}]_{7} 3^{-3}$ & $[2^{-2}4^{+1}]_{1} 3^{-4}$\\
85 & $D_{12}$ & 14 & 2 & 3 & $2^{-2}_{\mathrm{II}} 3^{-5}$ & $2^{-2}_{\mathrm{II}} 3^{-6}$ & $2^{-2}_{\mathrm{II}}4^{+1}_{1} 3^{-4}$ & $2^{-2}_{\mathrm{II}}4^{+1}_{7} 3^{+5}$\\
86 & $S_3\times C_3$ & 14 & 3 & 3 & $2^{-2}_{\mathrm{II}} 3^{-5}$ & $2^{-2}_{\mathrm{II}} 3^{-6}$ & $2^{-2}_{\mathrm{II}} 3^{-6}$ & $2^{-2}_{\mathrm{II}} 3^{-5}$\\
87 & $S_3\times C_3$ & 14 & 3 & 2 & $2^{-2}_{\mathrm{II}} 3^{-5}$ & $2^{-2}_{\mathrm{II}} 3^{-6}$ & $2^{-2}_{\mathrm{II}} 3^{-3}9^{+1}$ & $2^{-2}_{\mathrm{II}} 3^{-4}9^{-1}$\\
88 & $D_{12}$ & 14 & 2 & 6 & $2^{-2}_{\mathrm{II}} 3^{-5}$ & $2^{-2}_{\mathrm{II}} 3^{+4}$ & $2^{-2}_{\mathrm{II}} 3^{+4}$ & $2^{-2}_{\mathrm{II}} 3^{-5}$\\
89 & $S_3\times C_4$ & 14 & 4 & 3 & $2^{-2}_{\mathrm{II}} 3^{-5}$ & $2^{-2}_{\mathrm{II}} 3^{+4}$ & $2^{-2}_{\mathrm{II}} 3^{+4}$ & $2^{-2}_{\mathrm{II}} 3^{-5}$\\
90 & $S_3\times C_6$ & 14 & 6 & 3 & $2^{-2}_{\mathrm{II}} 3^{-5}$ & $2^{-2}_{\mathrm{II}} 3^{+4}$ & $2^{-2}_{\mathrm{II}} 3^{+4}$ & $2^{-2}_{\mathrm{II}} 3^{-5}$\\
91 & $S_3\times C_6$ & 14 & 6 & 3 & $2^{-2}_{\mathrm{II}} 3^{-5}$ & $2^{-2}_{\mathrm{II}} 3^{+4}$ & $2^{-2}_{\mathrm{II}} 3^{+4}$ & $2^{-2}_{\mathrm{II}} 3^{-5}$\\
92 & $S_3\times C_6$ & 14 & 6 & 2 & $2^{-2}_{\mathrm{II}} 3^{-5}$ & $2^{-2}_{\mathrm{II}} 3^{+4}$ & $2^{-2}_{\mathrm{II}} 3^{-3}9^{+1}$ & $2^{-2}_{\mathrm{II}} 3^{-4}9^{-1}$\\
93 & $S_3\times C_8$ & 14 & 8 & 1 & $2^{-2}_{\mathrm{II}} 3^{-5}$ & $2^{-2}_{\mathrm{II}} 3^{+4}$ & $2^{-2}_{\mathrm{II}} 3^{+4}$ & $2^{-2}_{\mathrm{II}} 3^{-5}$\\
94 & $S_3\times C_{12}$ & 14 & 12 & 1 & $2^{-2}_{\mathrm{II}} 3^{-5}$ & $2^{-2}_{\mathrm{II}} 3^{+4}$ & $2^{-2}_{\mathrm{II}} 3^{+4}$ & $2^{-2}_{\mathrm{II}} 3^{-5}$\\
95 & $S_3\times C_{12}$ & 14 & 12 & 1 & $2^{-2}_{\mathrm{II}} 3^{-5}$ & $2^{-2}_{\mathrm{II}} 3^{+4}$ & $2^{-2}_{\mathrm{II}} 3^{+4}$ & $2^{-2}_{\mathrm{II}} 3^{-5}$\\
96 & $S_3\times C_{24}$ & 14 & 24 & 0 & $2^{-2}_{\mathrm{II}} 3^{-5}$ & $2^{-2}_{\mathrm{II}} 3^{+4}$ & $2^{-2}_{\mathrm{II}} 3^{+4}$ & $2^{-2}_{\mathrm{II}} 3^{-5}$\\
\hline
97 & $C_2^2$ & 12 & 1 & 8 & $2^{-6}_{\mathrm{II}}4^{-2}_{\mathrm{II}}$ & $2^{-6}_{\mathrm{II}}4^{-2}_{\mathrm{II}} 3^{-1}$ & $2^{-6}_{\mathrm{II}}4^{-2}_{\mathrm{II}} 3^{-1}$ & $2^{-6}_{\mathrm{II}}4^{-2}_{\mathrm{II}}$\\
98 & $C_2^3$ & 12 & 2 & 7 & $2^{-6}_{\mathrm{II}}4^{-2}_{\mathrm{II}}$ & $2^{-6}_{\mathrm{II}}4^{-2}_{\mathrm{II}} 3^{-1}$ & $2^{-6}_{\mathrm{II}}8^{+1}_{1}$ & $2^{-6}_{\mathrm{II}}8^{+1}_{7} 3^{-1}$\\
99 & $D_8$ & 12 & 2 & 6 & $2^{-6}_{\mathrm{II}}4^{-2}_{\mathrm{II}}$ & $2^{-6}_{\mathrm{II}}4^{-2}_{\mathrm{II}} 3^{-1}$ & $2^{-2}_{\mathrm{II}}4^{+4}_{\mathrm{II}}$ & $2^{-2}_{\mathrm{II}}4^{+4}_{\mathrm{II}} 3^{-1}$\\
100 & $C_2^3$ & 12 & 2 & 5 & $2^{-6}_{\mathrm{II}}4^{-2}_{\mathrm{II}}$ & $2^{-6}_{\mathrm{II}}4^{-2}_{\mathrm{II}} 3^{-1}$ & $4^{-5}_{3}$ & $4^{-5}_{5} 3^{-1}$\\
101 & $C_{6} \times C_{2}$ & 12 & 3 & 4 & $2^{-6}_{\mathrm{II}}4^{-2}_{\mathrm{II}}$ & $2^{-6}_{\mathrm{II}}4^{-2}_{\mathrm{II}} 3^{-1}$ & $2^{-6}_{\mathrm{II}}4^{-2}_{\mathrm{II}} 3^{-1}$ & $2^{-6}_{\mathrm{II}}4^{-2}_{\mathrm{II}}$\\
102 & $A_4$ & 12 & 3 & 3 & $2^{-6}_{\mathrm{II}}4^{-2}_{\mathrm{II}}$ & $2^{-6}_{\mathrm{II}}4^{-2}_{\mathrm{II}} 3^{-1}$ & $2^{+4}_{\mathrm{II}}4^{-2}_{\mathrm{II}} 3^{+2}$ & $2^{+4}_{\mathrm{II}}4^{-2}_{\mathrm{II}} 3^{+1}$\\
103 & $D_8\times C_3$ & 12 & 6 & 3 & $2^{-6}_{\mathrm{II}}4^{-2}_{\mathrm{II}}$ & $2^{-6}_{\mathrm{II}}4^{-2}_{\mathrm{II}} 3^{-1}$ & $2^{-2}_{\mathrm{II}}4^{+4}_{\mathrm{II}}$ & $2^{-2}_{\mathrm{II}}4^{+4}_{\mathrm{II}} 3^{-1}$\\
104 & $A_4\times C_2$ & 12 & 6 & 2 & $2^{-6}_{\mathrm{II}}4^{-2}_{\mathrm{II}}$ & $2^{-6}_{\mathrm{II}}4^{-2}_{\mathrm{II}} 3^{-1}$ & $4^{+4}_{\mathrm{II}} 3^{-1}$ & $4^{+4}_{\mathrm{II}} 3^{-2}$\\
105 & $A_4\times C_2$ & 12 & 6 & 2 & $2^{-6}_{\mathrm{II}}4^{-2}_{\mathrm{II}}$ & $2^{-6}_{\mathrm{II}}4^{-2}_{\mathrm{II}} 3^{-1}$ & $2^{+4}_{\mathrm{II}} 3^{+3}$ & $2^{+4}_{\mathrm{II}} 3^{+4}$\\
106 & $C_{3}$ & 12 & 1 & 8 & $3^{+6}$ & $3^{-7}$ & $3^{-7}$ & $3^{+6}$\\
107 & $S_3$ & 12 & 2 & 4 & $3^{+6}$ & $3^{-7}$ & $2^{+4}_{6} 3^{-4}$ & $2^{-4}_{6} 3^{+5}$\\
108 & $C_{6}$ & 12 & 2 & 4 & $3^{+6}$ & $3^{-7}$ & $2^{+4}_{\mathrm{II}} 3^{-5}$ & $2^{+4}_{\mathrm{II}} 3^{-6}$\\
109 & $C_{6}$ & 12 & 2 & 4 & $3^{+6}$ & $3^{-7}$ & $2^{-4}_{0} 3^{-3}$ & $2^{+4}_{4} 3^{-4}$\\
110 & $C_3^2$ & 12 & 3 & 4 & $3^{+6}$ & $3^{-7}$ & $3^{-7}$ & $3^{+6}$\\
111 & $C_3^2$ & 12 & 3 & 3 & $3^{+6}$ & $3^{-7}$ & $3^{-4}9^{+1}$ & $3^{+5}9^{-1}$\\
112 & $C_3^2$ & 12 & 3 & 2 & $3^{+6}$ & $3^{-7}$ & $3^{+3}9^{+2}$ & $3^{+4}9^{+2}$\\
113 & $C_{6} \times C_{3}$ & 12 & 6 & 2 & $3^{+6}$ & $3^{-7}$ & $2^{+4}_{\mathrm{II}} 3^{-5}$ & $2^{+4}_{\mathrm{II}} 3^{-6}$\\
114 & $C_{6} \times C_{3}$ & 12 & 6 & 1 & $3^{+6}$ & $3^{-7}$ & $2^{-2}_{\mathrm{II}} 3^{+2}9^{+1}$ & $2^{-2}_{\mathrm{II}} 3^{-3}9^{-1}$\\
115 & $S_3$ & 12 & 2 & 8 & $3^{+6}$ & $3^{+5}$ & $3^{+5}$ & $3^{+6}$\\
116 & $S_3\times C_3$ & 12 & 6 & 4 & $3^{+6}$ & $3^{+5}$ & $3^{+5}$ & $3^{+6}$\\
117 & $S_3\times C_3$ & 12 & 6 & 4 & $3^{+6}$ & $3^{+5}$ & $3^{+5}$ & $3^{+6}$\\
118 & $S_3\times C_3$ & 12 & 6 & 3 & $3^{+6}$ & $3^{+5}$ & $3^{-4}9^{+1}$ & $3^{+5}9^{-1}$\\
\hline
119 & $C_{2}$ & 8 & 1 & 12 & $2^{+8}_{\mathrm{II}}$ & $2^{+8}_{\mathrm{II}} 3^{-1}$ & $2^{+8}_{\mathrm{II}} 3^{-1}$ & $2^{+8}_{\mathrm{II}}$\\
120 & $C_2^2$ & 8 & 2 & 10 & $2^{+8}_{\mathrm{II}}$ & $2^{+8}_{\mathrm{II}} 3^{-1}$ & $2^{+4}_{\mathrm{II}}4^{-2}_{\mathrm{II}}$ & $2^{+4}_{\mathrm{II}}4^{-2}_{\mathrm{II}} 3^{-1}$\\
121 & $C_2^2$ & 8 & 2 & 8 & $2^{+8}_{\mathrm{II}}$ & $2^{+8}_{\mathrm{II}} 3^{-1}$ & $2^{-6}_{\mathrm{II}}4^{+2}_{2}$ & $2^{-6}_{\mathrm{II}}4^{+2}_{6} 3^{-1}$\\
122 & $C_2^2$ & 8 & 2 & 6 & $2^{+8}_{\mathrm{II}}$ & $2^{+8}_{\mathrm{II}} 3^{-1}$ & $[2^{+2}4^{+4}]_{4}$ & $[2^{-2}4^{+4}]_{0} 3^{-1}$\\
123 & $C_{6}$ & 8 & 3 & 6 & $2^{+8}_{\mathrm{II}}$ & $2^{+8}_{\mathrm{II}} 3^{-1}$ & $2^{+8}_{\mathrm{II}} 3^{-1}$ & $2^{+8}_{\mathrm{II}}$\\
124 & $C_{4} \times C_{2}$ & 8 & 4 & 5 & $2^{+8}_{\mathrm{II}}$ & $2^{+8}_{\mathrm{II}} 3^{-1}$ & $2^{+4}_{\mathrm{II}}4^{-2}_{\mathrm{II}}$ & $2^{+4}_{\mathrm{II}}4^{-2}_{\mathrm{II}} 3^{-1}$\\
125 & $C_{6} \times C_{2}$ & 8 & 6 & 5 & $2^{+8}_{\mathrm{II}}$ & $2^{+8}_{\mathrm{II}} 3^{-1}$ & $2^{+4}_{\mathrm{II}}4^{-2}_{\mathrm{II}}$ & $2^{+4}_{\mathrm{II}}4^{-2}_{\mathrm{II}} 3^{-1}$\\
126 & $C_{6}$ & 8 & 3 & 4 & $2^{+8}_{\mathrm{II}}$ & $2^{+8}_{\mathrm{II}} 3^{-1}$ & $2^{-6}_{\mathrm{II}} 3^{-5}$ & $2^{-6}_{\mathrm{II}} 3^{+4}$\\
127 & $C_{4} \times C_{2}$ & 8 & 4 & 4 & $2^{+8}_{\mathrm{II}}$ & $2^{+8}_{\mathrm{II}} 3^{-1}$ & $2^{-6}_{\mathrm{II}}4^{+2}_{2}$ & $2^{-6}_{\mathrm{II}}4^{+2}_{6} 3^{-1}$\\
128 & $C_{6} \times C_{2}$ & 8 & 6 & 3 & $2^{+8}_{\mathrm{II}}$ & $2^{+8}_{\mathrm{II}} 3^{-1}$ & $2^{-2}_{\mathrm{II}}4^{-2}_{\mathrm{II}} 3^{+4}$ & $2^{-2}_{\mathrm{II}}4^{-2}_{\mathrm{II}} 3^{-5}$\\
129 & $C_{8} \times C_{2}$ & 8 & 8 & 2 & $2^{+8}_{\mathrm{II}}$ & $2^{+8}_{\mathrm{II}} 3^{-1}$ & $2^{+4}_{\mathrm{II}}4^{-2}_{\mathrm{II}}$ & $2^{+4}_{\mathrm{II}}4^{-2}_{\mathrm{II}} 3^{-1}$\\
130 & $C_{12} \times C_{2}$ & 8 & 12 & 2 & $2^{+8}_{\mathrm{II}}$ & $2^{+8}_{\mathrm{II}} 3^{-1}$ & $2^{+4}_{\mathrm{II}}4^{-2}_{\mathrm{II}}$ & $2^{+4}_{\mathrm{II}}4^{-2}_{\mathrm{II}} 3^{-1}$\\
131 & $C_{12} \times C_{2}$ & 8 & 12 & 1 & $2^{+8}_{\mathrm{II}}$ & $2^{+8}_{\mathrm{II}} 3^{-1}$ & $2^{-2}_{\mathrm{II}}4^{-2}_{\mathrm{II}} 3^{+4}$ & $2^{-2}_{\mathrm{II}}4^{-2}_{\mathrm{II}} 3^{-5}$\\
\hline
132 & $1$ & 0 & 1 & 20 & $0$ & $3^{-1}$ & $3^{-1}$ & $0$\\
133 & $C_{2}$ & 0 & 2 & 14 & $0$ & $3^{-1}$ & $2^{-6}_{\mathrm{II}}$ & $2^{-6}_{\mathrm{II}} 3^{-1}$\\
134 & $C_{2}$ & 0 & 2 & 10 & $0$ & $3^{-1}$ & $2^{+10}_{0}$ & $2^{-10}_{4} 3^{-1}$\\
135 & $C_{3}$ & 0 & 3 & 10 & $0$ & $3^{-1}$ & $3^{-1}$ & $0$\\
136 & $C_{3}$ & 0 & 3 & 7 & $0$ & $3^{-1}$ & $3^{+6}$ & $3^{+5}$\\
137 & $C_{4}$ & 0 & 4 & 7 & $0$ & $3^{-1}$ & $2^{-6}_{\mathrm{II}}$ & $2^{-6}_{\mathrm{II}} 3^{-1}$\\
138 & $C_{4}$ & 0 & 4 & 7 & $0$ & $3^{-1}$ & $2^{-6}_{\mathrm{II}}$ & $2^{-6}_{\mathrm{II}} 3^{-1}$\\
139 & $C_{6}$ & 0 & 6 & 7 & $0$ & $3^{-1}$ & $2^{-6}_{\mathrm{II}}$ & $2^{-6}_{\mathrm{II}} 3^{-1}$\\
140 & $C_{3}$ & 0 & 3 & 6 & $0$ & $3^{-1}$ & $3^{+7}$ & $3^{+8}$\\
141 & $C_{6}$ & 0 & 6 & 5 & $0$ & $3^{-1}$ & $2^{+4}_{\mathrm{II}} 3^{+4}$ & $2^{+4}_{\mathrm{II}} 3^{-5}$\\
142 & $C_{6}$ & 0 & 6 & 4 & $0$ & $3^{-1}$ & $2^{-6}_{\mathrm{II}} 3^{-5}$ & $2^{-6}_{\mathrm{II}} 3^{-6}$\\
143 & $C_{6}$ & 0 & 6 & 3 & $0$ & $3^{-1}$ & $2^{-6}_{\mathrm{II}} 3^{+4}$ & $2^{-6}_{\mathrm{II}} 3^{-5}$\\
144 & $C_{8}$ & 0 & 8 & 3 & $0$ & $3^{-1}$ & $2^{-6}_{\mathrm{II}}$ & $2^{-6}_{\mathrm{II}} 3^{-1}$\\
145 & $C_{8}$ & 0 & 8 & 3 & $0$ & $3^{-1}$ & $2^{-6}_{\mathrm{II}}$ & $2^{-6}_{\mathrm{II}} 3^{-1}$\\
146 & $C_{8}$ & 0 & 8 & 3 & $0$ & $3^{-1}$ & $2^{-6}_{\mathrm{II}}$ & $2^{-6}_{\mathrm{II}} 3^{-1}$\\
147 & $C_{12}$ & 0 & 12 & 3 & $0$ & $3^{-1}$ & $2^{-6}_{\mathrm{II}}$ & $2^{-6}_{\mathrm{II}} 3^{-1}$\\
148 & $C_{12}$ & 0 & 12 & 3 & $0$ & $3^{-1}$ & $2^{-6}_{\mathrm{II}}$ & $2^{-6}_{\mathrm{II}} 3^{-1}$\\
149 & $C_{12}$ & 0 & 12 & 2 & $0$ & $3^{-1}$ & $2^{+4}_{\mathrm{II}} 3^{+4}$ & $2^{+4}_{\mathrm{II}} 3^{-5}$\\
150 & $C_{12}$ & 0 & 12 & 2 & $0$ & $3^{-1}$ & $2^{+4}_{\mathrm{II}} 3^{+4}$ & $2^{+4}_{\mathrm{II}} 3^{-5}$\\
151 & $C_{16}$ & 0 & 16 & 1 & $0$ & $3^{-1}$ & $2^{-6}_{\mathrm{II}}$ & $2^{-6}_{\mathrm{II}} 3^{-1}$\\
152 & $C_{16}$ & 0 & 16 & 1 & $0$ & $3^{-1}$ & $2^{-6}_{\mathrm{II}}$ & $2^{-6}_{\mathrm{II}} 3^{-1}$\\
153 & $C_{24}$ & 0 & 24 & 1 & $0$ & $3^{-1}$ & $2^{-6}_{\mathrm{II}}$ & $2^{-6}_{\mathrm{II}} 3^{-1}$\\
154 & $C_{24}$ & 0 & 24 & 1 & $0$ & $3^{-1}$ & $2^{-6}_{\mathrm{II}}$ & $2^{-6}_{\mathrm{II}} 3^{-1}$\\
155 & $C_{32}$ & 0 & 32 & 0 & $0$ & $3^{-1}$ & $2^{-6}_{\mathrm{II}}$ & $2^{-6}_{\mathrm{II}} 3^{-1}$\\
156 & $C_{48}$ & 0 & 48 & 0 & $0$ & $3^{-1}$ & $2^{-6}_{\mathrm{II}}$ & $2^{-6}_{\mathrm{II}} 3^{-1}$\\
\hline
\end{longtable}
\endgroup

\bibliography{reference}

@article {Matsumura1963OnTA,
    AUTHOR = {Matsumura, H. and Monsky, P.},
     TITLE = {On the automorphisms of hypersurfaces},
   JOURNAL = {J. Math. Kyoto Univ.},
    VOLUME = {3},
      YEAR = {1963/64},
     PAGES = {347--361},
       DOI = {10.1215/kjm/1250524785},
}

@article {voisin1986torelli,
    AUTHOR = {Voisin, C.},
     TITLE = {Th\'eor\`eme de {T}orelli pour les cubiques de {$\mathbb P^5$}},
   JOURNAL = {Invent. Math.},
    VOLUME = {86},
      YEAR = {1986},
    NUMBER = {3},
     PAGES = {577--601},
       DOI = {10.1007/BF01389270},
}

@article {voisin2008erratum,
    AUTHOR = {Voisin, C.},
     TITLE = {Erratum: ``{A} {T}orelli theorem for cubics in {$\mathbb P^5$}''},
   JOURNAL = {Invent. Math.},
    VOLUME = {172},
      YEAR = {2008},
    NUMBER = {2},
     PAGES = {455--458},
       DOI = {10.1007/s00222-008-0116-z},
}

@article {Nikulin_1980,
    AUTHOR = {Nikulin, V. V.},
     TITLE = {Integral symmetric bilinear forms and some of their applications},
   JOURNAL = {Math. USSR-Izv.},
    VOLUME = {14},
      YEAR = {1980},
    NUMBER = {1},
     PAGES = {103--167},
       DOI = {10.1070/IM1980v014n01ABEH001060},
}

@article {zheng2021orbifold,
    AUTHOR = {Zheng, Zhiwei},
     TITLE = {Orbifold aspects of certain occult period maps},
   JOURNAL = {Nagoya Math. J.},
    VOLUME = {243},
      YEAR = {2021},
     PAGES = {137--156},
       DOI = {10.1017/nmj.2019.36},
}

@article {griffiths1969rational,
    AUTHOR = {Griffiths, Phillip A.},
     TITLE = {On the periods of certain rational integrals. {I}},
   JOURNAL = {Ann. of Math. (2)},
    VOLUME = {90},
      YEAR = {1969},
    NUMBER = {3},
     PAGES = {460--495},
       DOI = {10.2307/1970746},
}

@article {hassett2000special,
    AUTHOR = {Hassett, B.},
     TITLE = {Special cubic fourfolds},
   JOURNAL = {Compositio Math.},
    VOLUME = {120},
      YEAR = {2000},
    NUMBER = {1},
     PAGES = {1--23},
       DOI = {10.1023/A:1001706324425},
}

@article {laza2010period,
    AUTHOR = {Laza, R.},
     TITLE = {The moduli space of cubic fourfolds via the period map},
   JOURNAL = {Ann. of Math. (2)},
    VOLUME = {172},
      YEAR = {2010},
    NUMBER = {1},
     PAGES = {673--711},
       DOI = {10.4007/annals.2010.172.673},
}

@article {looi2009period,
    AUTHOR = {Looijenga, E.},
     TITLE = {The period map for cubic fourfolds},
   JOURNAL = {Invent. Math.},
    VOLUME = {177},
      YEAR = {2009},
    NUMBER = {1},
     PAGES = {213--233},
       DOI = {10.1007/s00222-009-0178-6},
}

@article {yu2020moduli,
    AUTHOR = {Yu, C. and Zheng, Z.},
     TITLE = {Moduli spaces of symmetric cubic fourfolds and locally symmetric varieties},
   JOURNAL = {Algebra Number Theory},
    VOLUME = {14},
      YEAR = {2020},
    NUMBER = {10},
     PAGES = {2647--2683},
       DOI = {10.2140/ant.2020.14.2647},
}

@article {laza2022automorphisms,
    AUTHOR = {Laza, Radu and Zheng, Zhiwei},
     TITLE = {Automorphisms and periods of cubic fourfolds},
   JOURNAL = {Math. Z.},
  FJOURNAL = {Mathematische Zeitschrift},
    VOLUME = {300},
      YEAR = {2022},
    NUMBER = {2},
     PAGES = {1455--1507},
      ISSN = {0025-5874,1432-1823},
       DOI = {10.1007/s00209-021-02810-x},
       URL = {https://doi.org/10.1007/s00209-021-02810-x},
}

@article{hohn2016290,
  author={H{\"o}hn, Gerald and Mason, Geoffrey},
  title={The 290 fixed-point sublattices of the {Leech} lattice},
  journal={Journal of Algebra},
  volume={448},
  pages={618--637},
  year={2016},
  doi={10.1016/j.jalgebra.2015.08.028},
  url={https://doi.org/10.1016/j.jalgebra.2015.08.028}
}

@article {KOIKE202512,
    AUTHOR = {Koike, Kenji},
     TITLE = {Cubic fourfolds with symplectic automorphisms},
   JOURNAL = {J. Algebra},
  FJOURNAL = {Journal of Algebra},
    VOLUME = {680},
      YEAR = {2025},
     PAGES = {12--57},
      ISSN = {0021-8693,1090-266X},
       DOI = {10.1016/j.jalgebra.2025.04.037},
       URL = {https://doi.org/10.1016/j.jalgebra.2025.04.037},
}

@article {KOIKE2026,
    AUTHOR = {Koike, Kenji},
     TITLE = {Corrigendum to ``{C}ubic fourfolds with symplectic automorphisms''},
   JOURNAL = {J. Algebra},
  FJOURNAL = {Journal of Algebra},
    VOLUME = {705},
      YEAR = {2026},
     PAGES = {358--362},
      ISSN = {0021-8693,1090-266X},
       DOI = {10.1016/j.jalgebra.2026.05.021},
       URL = {https://doi.org/10.1016/j.jalgebra.2026.05.021},
}

@misc {FWZ26,
    AUTHOR = {Fu, Jie and Wang, Shihao and Zheng, Zhiwei},
     TITLE = {Non-symplectic indices of automorphism groups of smooth cubic fourfolds},
      YEAR = {2026},
      NOTE = {arXiv:2606.11754},
       URL = {https://arxiv.org/abs/2606.11754},
}

@misc {FZ26,
    AUTHOR = {Fu, Jie and Zheng, Zhiwei},
     TITLE = {Automorphism groups of smooth cubic fourfolds through lattice theory},
      YEAR = {2026},
      NOTE = {arXiv:2609.06683},
       URL = {https://arxiv.org/abs/2609.06683},
}

@misc {FWZclass,
    AUTHOR = {Fu, Jie and Wang, Shihao and Zheng, Zhiwei},
     TITLE = {Classification of automorphism groups of smooth cubic threefolds and fourfolds},
      YEAR = {2026},
      NOTE = {arXiv:2609.20403},
       URL = {https://arxiv.org/abs/2609.20403},
}

@misc {MarquandInvolution,
    AUTHOR = {Marquand, Lisa},
     TITLE = {Cubic fourfolds with an involution},
      YEAR = {2022},
      NOTE = {arXiv:2202.13213},
       URL = {https://arxiv.org/abs/2202.13213},
}

@article {yang2024automorphism,
    AUTHOR = {Yang, Song and Yu, Xun and Zhu, Zigang},
     TITLE = {Automorphism groups of cubic fivefolds and fourfolds},
   JOURNAL = {J. Lond. Math. Soc. (2)},
    VOLUME = {110},
      YEAR = {2024},
    NUMBER = {4},
     PAGES = {Paper No. e12997, 35},
       DOI = {10.1112/jlms.12997},
       URL = {https://doi.org/10.1112/jlms.12997},
}

@misc {he2025cubicfourfoldsorder7automorphism,
    AUTHOR = {He, Xuancong and Li, Yi and Wang, Shihao and Zheng, Zhiwei},
     TITLE = {Cubic fourfolds with an order-$7$ automorphism},
      YEAR = {2025},
      NOTE = {arXiv:2509.26359v1},
       URL = {https://arxiv.org/abs/2509.26359},
}

@manual {OSCAR,
       KEY = {OSCAR},
ORGANIZATION = {The OSCAR Team},
     TITLE = {{OSCAR} -- Open Source Computer Algebra Research System, Version 1.8.2},
      YEAR = {2026},
       URL = {https://www.oscar-system.org},
}

@misc{FuWangZhengAutcub4fold2026,
  author       = {Fu, Jie and Wang, Shihao and Zheng, Zhiwei},
  title        = {Computational materials for automorphism groups of smooth cubic threefolds and fourfolds},
  year         = {2026},
  howpublished = {Zenodo},
  note         = {Version v2.1.0, \url{https://doi.org/10.5281/zenodo.23437698}},
  doi          = {10.5281/zenodo.23437698},
  url          = {https://zenodo.org/records/23437698}
}

\end{document}